\newtheorem{theorem}{Theorem}
\newtheorem{lemma}{Lemma}
\newtheorem{conjecture}{Conjecture}
\begin{document}
\title[Fractional integrals, lattice points, and diophantine approximation ]
      {Discrete fractional integrals, lattice points on short arcs, and diophantine approximation }
      
\author{Faruk Temur}
\address{Department of Mathematics\\
        Izmir Institute of Technology\\ Urla \\Izmir \\ 35430\\Turkey  }
\email{faruktemur@iyte.edu.tr}

\keywords{Discrete  fractional integral operators, Lattice points on curves, Diophantine approximation}
\subjclass[2010]{Primary: 42B20, 11P21, 11K60, 11E16 ;  Secondary:  44A12,    11K06,  11J87}
\date{December  19, 2020}

\begin{abstract}
 Recently in  joint work   with E. Sert, we proved sharp boundedness results on discrete fractional integral operators along binary quadratic forms.  Present work vastly enhances the scope of those results  by  extending boundedness to
      bivariate  quadratic  polynomials.  We achieve this in part by  establishing  connections  to problems on   concentration of lattice points on short arcs of conics, whence  we study discrete fractional integrals and lattice point concentration from a unified perspective via tools of sieving and diophantine approximation, and prove theorems that are of interest to researchers in both subjects.
 \end{abstract}
\maketitle

\section{Introduction}

Let $f:\mathbb{Z}^l \rightarrow \mathbb{C}$ be a function and $P:\mathbb{Z}^{k+l} \rightarrow \mathbb{Z}^l$ be a polynomial with integer coefficients. Let $\mathbb{Z}^k_*=\mathbb{Z}^k-\{0\}$, and $0< \lambda\leq 1$.  We call
\begin{equation}\label{1.1}
\mathcal{I}_{\lambda,P}f(n)=\sum_{m\in \mathbb{Z}^k_*} \frac{f(P(m,n))}{|m|^{\lambda k}}
\end{equation}
    a discrete fractional integral operator, and  $P(m,n)$  the phase polynomial. When $P(m,n)=n-m$ the range of boundedness of these operators is the same as that of their continuous counterparts, and is given by Hardy-Littlewood-Sobolev inequality.  But when $P(m,n)$ contains higher order terms the two  cases differ significantly.  This phenomenon has generated immense interest in the last thirty years, see \cite{lbp} for an account.
    
    To investigate this phenomenon the most natural case to consider is the translation invariant case $P(m,n)=n-Q(m)$. Another case that has a similar flavor   is the  quasi-translation invariant case
    \begin{equation*}\label{}
    \mathcal{J}_{\lambda}f(n,n')=\sum_{m\in \mathbb{Z}^k_*} \frac{f(n-m,n'-Q(m,n))}{|m|^{\lambda k}},
    \end{equation*}
    where $f:\mathbb{Z}^{k+l} \rightarrow \mathbb{C}$, $Q:\mathbb{Z}^{k+k} \rightarrow \mathbb{Z}^l$.  These two cases  owing to  applicability of  Fourier analysis are relatively accessible, and have been studied intensely from this point of view for three decades, for a summary of the results see \cite{lbp,ts}.  The translation invariant case has also been studied from an alternative point of view by Oberlin \cite{ob} that uses arithmetic, and in particular representation of integers as sums of squares, rather than Fourier analysis.  But up  until the recent work of the author with E. Sert  \cite{ts},  no work existed on operators with  neither translation invariant nor  quasi-translation invariant phase polynomials.  In that work using arithmetic  extensively,  we proved first instances of such results. We start the discussion of results of this article by giving a brief summary of that work. To this end we introduce the relevant notation

    Let $P:\mathbb{Z}^2\rightarrow \mathbb{Z}$ be a  bivariate  quadratic  polynomial of integral coefficients, that is 
    \begin{equation*}
   P(m,n)=am^2+bmn+cn^2+dm+en+f,  \ \ \ \ \ \ \  \quad a,b,c,d,e,f,m,n \in \mathbb{Z}
        \end{equation*}
   with at least one of $a,b,c$ nonzero.  We call $\Delta(P):=b^2-4ac$ the discriminant of the polynomial, and we also define  the quantities $\alpha(P):= 2cd-be, \ \beta(P):=2ae-bd$. When the polynomial $P$ is clear from the context we will just write $\Delta, \alpha, \beta$.  If $d=e=f=0$,  the polynomial is called an integral binary quadratic form.   Henceforth we will exclusively concentrate on such polynomials and forms, and indeed  use the terms polynomial  and form to refer to them. Unless exlicitly stated otherwise, the letter $P$ will stand for  such polynomials, and $q$ for such forms. For a given polynomial $P$ the letter $Q$ will denote the corresponding form obtained by setting $d,e,f=0$. 
   We  reserve the letter $k$ for integer values $P(m,n)$ takes on integer inputs $m,n$. The pair $(m,n)$ is called a representation of $k$ by the polynomial if $P(m,n)=k$. When we want to consider our polynomials on real numbers we will prefer writing $P(x,y)$.    For a set $E$, we  let $\# E$ denote its cardinality, and $|E|$ its Lebesgue measure. 
   
   For a form $q$, if $\Delta<0$,  which implies $ac>0$, the form is called definite as it is   nonnegative or nonpositive on all real entries. This is clear by the identity
   \[q(x,y)=ax^2+bxy+cy^2=a\Big[ \big(x+\frac{b}{2a}y     \big)^2 -\frac{\Delta y^2}{4a^2}   \Big] .   \]
    If $a,c>0$, being always nonnegative, the form is called  positive definite whereas if $a,c<0$, it is called  negative definite analogously. When $\Delta>0$ the form  is called indefinite.

     In recent joint work with E. Sert \cite{ts}, we proved that with $q(m,n)$ an integral binary quadratic form with negative or positive nonsquare discriminant, $\mathcal{I}_{\lambda,q}$ is a bounded operator on $l^p(\mathbb{Z}), \  1\leq  p<\infty$  for $\lambda >1-p^{-1}$.  Such  forms are of course neither translation invariant nor quasi translation invariant.   We further showed that for $p=1$ the denominator $|m|^{\lambda}$ cannot be replaced by $\log^r (1+|m|)$, and for $1<p<\infty$ we cannot take $\lambda =1-p^{-1}$.
 The main framework set out in  that  article to prove these results will  be in use in this work as well, and we present it below in order to make discussion of results and ideas  of both articles possible. Further familiarity with that article is not necessary,  but could be helpful.

 In our framework the  $p=1$  emerges as the guiding case, we therefore focus our discussion on it.  The cases $p>1$ follow from the same arguments applied after the Hölder inequality.  
 Let $P$ be an  arbitrary  bivariate quadratic polynomial of integral coefficients. We have
 \begin{equation*}\label{}
 \|\mathcal{I}_{\lambda}f\|_{l^1(\mathbb{Z})}= \sum_{n\in \mathbb{Z} }\Big|  \sum_{m\in \mathbb{Z}_*} \frac{f(P(m,n))}{|m|^{\lambda }}     \Big| \leq \sum_{(m,n)\in \mathbb{Z}_*\times \mathbb{Z}} \frac{|f(P(m,n))|}{|m|^{\lambda }}.     
 \end{equation*}
 We define    for each $k\in \mathbb{Z}$ the sets 
$
 A_{k}:=\{(m,n)\in \mathbb{Z}_* \times \mathbb{Z}: P(m,n)=k\}
$
that  form a partition of $\mathbb{Z}_* \times \mathbb{Z}$.  Therefore 
 \begin{equation*}\label{}
= \sum_{k\in \mathbb{Z} }|f(k)| \Big[ \sum_{(m,n)\in A_k} \frac{1}{|m|^{\lambda }}\Big].     
 \end{equation*}
As  $f\in l^1(\mathbb{Z})$, showing that 
  \begin{equation}\label{1.2}
 \sum_{(m,n)\in A_k} \frac{1}{|m|^{\lambda }}   
  \end{equation}
 is bounded by  a constant $C$ independent of $k$ would yield 
 $
  \|\mathcal{I}_{\lambda}f\|_{l^1(\mathbb{Z})}  
 \leq C
 \|f\|_{l^1(\mathbb{Z})}. 
 $
 Thus our framework  reduces   the problem  to understanding the quantity \eqref{1.2}, and this is related to the number and distribution of   representations of $k$ by the phase polynomial. 
 
 Binary quadratic forms are the simplest  bivariate   quadratic  polynomials, and further,
 representations by the latter can be investigated by transforming them into the  former by simple algebraic operations. Therefore it is most reasonable to initiate the study of   bivariate   quadratic   phase polynomials with binary quadratic forms.  Behavior of a quadratic form mostly depend on the sign of  its discriminant, and whether this discriminant is a full square, so we classify these forms accordingly. The negative discriminant and the positive nonsquare discriminant cases  comprise the content of our work \cite{ts}.
 
In the negative discriminant case we have at most  $C_{\Delta, \varepsilon}k^{\varepsilon}$  representations $(m,n)$ for an integer $k$, and of these only 4 can have $|m|\leq |k|^{1/4}/\sqrt{|\Delta|}$, so \eqref{1.2} is bounded for any $\lambda>0$. For  the  positive nonsquare discriminant case, we have infinitely many such representations, but these are generated from powers of  a fixed matrix,  which after an appropriate partition into $C_{\Delta, \varepsilon}k^{\varepsilon}$ subsets, allows us to demonstrate that  members of each subset   grow exponentially.  So we can treat this case as if there are at most $C_{\Delta, \varepsilon}k^{\varepsilon}$  representations. Also,  again only  4 representations can have $|m|\leq |k|^{1/4}/\sqrt{|\Delta|}$. Combining these we   bound  \eqref{1.2}  for any $\lambda >0$.

If the form  has a positive square discriminant, it factorizes into two distinct linear terms of integer coefficients,  and thus $0$  have  representations the first entries of which form an arithmetic progression. For example, letting $q(m,n)=m^2-n^2$, we see that $(m,m)$ represents $0$ for any integer $m$. So bounding $\eqref{1.2}$ is not possible even with $\lambda=1$, and nontrivial estimates are not possible for  $l^1(\mathbb{Z})$. But we must also notice that $0$ is the only number with this type of behavior, and any nonzero integer $k$ have only finitely many representations arising from its divisors. As the number of divisors is bounded by $C_{\varepsilon}k^{\varepsilon}$, if we can isolate the representations of $0$, and make sure   for the representations $(m,n)$ of nonzero $k$  that the values $|m|$ are away from $0$, we may have boundedness results for $l^p(\mathbb{Z}), \ p>1$, by bringing the exponent $p$ into the summation  \eqref{1.1}.  With these observations we prove our first theorem.

 \begin{theorem}
 Let $f\in l^p(\mathbb{Z})$  where $1<p<\infty$. Let $q$ be an    integral binary quadratic form with  positive square  discriminant $\Delta=\delta^2, \ \delta>0$, and $a,c\neq 0$. Then the  operator 
\eqref{1.1}
 satisfies
 \begin{equation*}
 \|\mathcal{I}_{\lambda}f\|_p \leq C_{\lambda,p,\Delta}\|f\|_p
 \end{equation*}
 for   $\lambda >\max\{1-p^{-1}, p^{-1}\}$. 
 \end{theorem}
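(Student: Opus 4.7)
The plan is to exploit the factorization over $\mathbb{Z}$ guaranteed by $\Delta=\delta^2$: since $q$ has integer coefficients, we may write
\[
q(m,n)=L_1(m,n)L_2(m,n),\qquad L_i(m,n)=\alpha_i m+\beta_i n,\quad \alpha_i,\beta_i\in\mathbb{Z},
\]
with $\alpha_1\alpha_2=a$, $\beta_1\beta_2=c$, $|\alpha_1\beta_2-\alpha_2\beta_1|=\delta$, and $\alpha_i\beta_i\neq 0$ (using $a,c\neq 0$). Every $(m,n)\in A_k$ corresponds to a factorization $k=d_1 d_2$ via $L_1=d_1,L_2=d_2$, yielding $|m|=|\beta_2 d_1-\beta_1 d_2|/\delta$; by the classical divisor bound, $\#A_k\le C_\varepsilon|k|^\varepsilon$ whenever $k\neq 0$.

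I would split $\mathcal{I}_\lambda f=T_0 f+T_{\neq}f$ according to whether $q(m,n)$ vanishes. The zero set $\{q=0\}$ is the union of the two lines $L_1=0,L_2=0$, and on each the integer points form an arithmetic progression through the origin. For a given $n$, at most one point on each line is reached, and only when $n$ lies in a specific sublattice; then the corresponding $|m|$ is a fixed multiple of $|n|$. Consequently $T_0 f(n)=f(0)w(n)$ where $w$ is supported on two arithmetic progressions and $|w(n)|\lesssim|n|^{-\lambda}$, so $\|w\|_p^p\lesssim\sum_{n\neq 0}|n|^{-\lambda p}$ is finite precisely when $\lambda>1/p$, yielding $\|T_0 f\|_p\lesssim|f(0)|\le\|f\|_p$.

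For $T_{\neq}f$, I would apply Hölder in the inner sum with the split $|m|^{-\lambda}=|m|^{-\lambda_1}|m|^{-\lambda_2}$, choosing $\lambda_2$ slightly larger than $1/p'$ so that $\sum_{m\neq 0}|m|^{-\lambda_2 p'}$ converges; this is permitted by $\lambda>1-1/p$ and leaves $\lambda_1=\lambda-\lambda_2>0$ small. Raising to the $p$-th power, summing over $n$, and reindexing the resulting double sum by $k=q(m,n)\neq 0$ --- using that for fixed $m$ the map $n\mapsto q(m,n)$ is at most two-to-one onto its image --- reduces the desired estimate to
\[
\sup_{k\neq 0}\sum_{(m,n)\in A_k}|m|^{-\lambda_1 p}<\infty.
\]
I would partition $A_k$ into the non-critical range $|d_1|\not\asymp|d_2|$, on which $|m|\gtrsim|k|^{1/2}$ and the divisor bound gives a contribution $\lesssim|k|^{\varepsilon-\lambda_1 p/2}\lesssim 1$, and the critical range $|d_1|\asymp|d_2|\asymp|k|^{1/2}$, where cancellation between $\beta_2 d_1$ and $\beta_1 d_2$ may yield small $|m|$.

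The main obstacle is the critical range. A crude bound using only $\#A_k\le C_\varepsilon|k|^\varepsilon$ gives $|A_k|^{1-\lambda_1 p}\lesssim|k|^{\varepsilon(1-\lambda_1 p)}$, which is not uniform when $\lambda_1 p<1$. The improvement will come from the observation that small $|m|$ forces $d_1$ into a short interval around $\pm\sqrt{|\beta_1 k/\beta_2|}$; a dyadic decomposition of $|m|$, combined with the two-to-one multiplicity in $n$ and a quantitative count of divisors of $k$ in short intervals, should provide the required uniform control. Together with the $T_0$ estimate, this yields $\|\mathcal{I}_\lambda f\|_p\lesssim\|f\|_p$ for $\lambda>\max\{1/p,\,1-1/p\}$.
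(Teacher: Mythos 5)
Your overall reduction matches the paper's: factor $q=L_1L_2$ over $\mathbb{Z}$, split off the zero set of $q$, handle that contribution via $\lambda p>1$, apply Hölder to the remaining part with the split $|m|^{-\lambda}=|m|^{-\lambda_1}|m|^{-\lambda_2}$, and reduce to showing $\sup_{k\neq 0}\sum_{(m,n)\in A_k}|m|^{-\lambda_1 p}<\infty$, with $\#A_k\lesssim_\varepsilon|k|^\varepsilon$ by the divisor bound. All of that is correct and is exactly the skeleton of the paper's argument.

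The gap is in the ``critical range,'' which you correctly identify as the crux but do not close. You propose to control it by ``a quantitative count of divisors of $k$ in short intervals.'' That route is either too weak or too strong. The trivial bound (number of divisors in an interval of length $H$ is $\leq H$) gives, for $|m|\sim 2^j$, roughly $O(\delta 2^j)$ divisors, and the resulting sum $\sum_j \delta 2^{j(1-\lambda_1 p)}$ over $2^j\lesssim |k|^{1/4}$ blows up in $k$ because $\lambda_1 p<1$ (by design $\lambda_1$ is small). A nontrivial count giving $O(1)$ divisors of $k$ within $O(|k|^{1/4})$ of $\sqrt{\beta_1 k/\beta_2}$ is essentially Ruzsa's conjecture (Conjecture 2 in the paper), an open problem. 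So as written the plan does not yield a proof.

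What the paper uses instead is an elementary geometric fact (Lemma 1) that bypasses divisor counting entirely: writing the branches of $q(x,y)=k$ as $y=f_{1,2}(x)$, the tangent lines at $x=0$ have the rational slope $-b/(2c)$, and for $|x|\leq |k|^{1/4}/\delta$ the curve stays within $1/(8c^{3/2})$ of these tangents. Every lattice point $(m,n)$ lies on one of the parallel lines $y=-\tfrac{b}{2c}x+\tfrac{j}{2c}$, $j\in\mathbb{Z}$, and these are spaced $1/(2c)$ apart, which exceeds $2\cdot\frac{1}{8c^{3/2}}$ since $c\geq 1$. Hence each tangent strip meets at most one lattice line, and that line meets the conic at most twice, giving at most $4$ solutions with $|m|\leq|k|^{1/4}/\delta$, uniformly in $k$. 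Combined with the divisor bound on the range $|m|>|k|^{1/4}/\delta$ (choosing $\varepsilon$ small relative to $\lambda_1 p$) this closes the estimate. I would replace your short-interval divisor step with this lattice/tangent-line argument.
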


 Here the condition $a\neq 0$ is clearly  necessary, for otherwise  for a function $f$ nonzero at the origin the  sum in \eqref{1.1} becomes infinite when $n=0$.  On the other hand 
it may be possible to obtain the same estimates as in this theorem with $c=0$, but this requires new ideas. For in this case solutions of  $am^2+bmn=k, \ k\neq 0$ lie on hyperbolas  for which the $y$-axis  is an asymptote.  To see clearly how this leads to difficulties, take  $k$ to be the product of first $j$ primes, and $a=b=1$. Then  the quantity \eqref{1.2} cannot be less than the sum of inverses of first $j$ primes, and hence not bounded by a  constant independent of $k$.  So 0 is not the only problematic value in this case. We find investigation of this case to be very worthwhile, as it may to lead to new connections to arithmetic. We may conduct such an investigation in a future article.

Our second  theorem represents a vast generalization of our work on binary quadratic forms to  bivariate  quadratic polynomials. The main idea is to use algebraic operations to reduce representation by a polynomial $P$ to  representation by the corresponding form $Q$, and  then  use a  decomposition and estimates obtained in \cite{ts} on sums of type \eqref{1.2}.

 \begin{theorem}
 Let $f\in l^p(\mathbb{Z})$  where $1\leq p<\infty$. Let $P$ be a 
 be an integral  bivariate   quadratic  polynomial with nonzero discriminant.  Let $\Gamma:=\Delta^{-1}Q(e,-d)+f$. Then the  operator \eqref{1.1}
 satisfies
 \begin{equation}\label{1.3}
 \|\mathcal{I}_{\lambda}f\|_p \leq C_{\lambda,p, \Delta,\alpha}\|f\|_p
 \end{equation}
 for $\lambda >1-p^{-1}$ if $\Delta$ is negative or  positive nonsquare,  and for
  \begin{equation*}\label{}
  \lambda > \begin{cases} 1-p^{-1} &\textnormal{if}  \ \    \{(m,n)\in \mathbb{Z}^2: \  P(m,n)=\Gamma \}=\emptyset      \\
   \max\{p^{-1},1-p^{-1}\} &\textnormal{else}   \end{cases}
   \end{equation*}
 if $\Delta$ is a positive square.
 \end{theorem}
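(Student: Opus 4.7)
My plan is to reduce the representation problem for the polynomial $P$ to that for a principal diagonal form of discriminant $4\Delta$ by two successive linear substitutions, and then inherit the estimates from \cite{ts} together with the strategy of Theorem 1. As recalled in the introduction, it suffices to control the sum \eqref{1.2} uniformly in $k$ for $p=1$; the case $p>1$ follows by Hölder. The first substitution completes the square twice: the identity
\begin{equation*}
4a\Delta P(m,n) = \Delta(2am+bn+d)^2 - (\Delta n - \beta)^2 + 4a\Delta\Gamma
\end{equation*}
together with the assignment $x := -bm-2cn-e$, $y := 2am+bn+d$ yields the equivalence $P(m,n)=k \Leftrightarrow Q(x,y) = \Delta(\Gamma-k)$. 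The affine map $(m,n)\mapsto(x,y)$ has determinant $-\Delta$; inverting it gives $m = \Delta^{-1}(\alpha - bx - 2cy)$ and $n = \Delta^{-1}(\beta + 2ax + by)$, so $A_k$ is in bijection with the set $\widetilde{B}_{k'}$ of integer solutions $(x,y)$ of $Q(x,y)=k'$, where $k' := \Delta(\Gamma-k)$, subject to two congruences modulo $\Delta$. Thus \eqref{1.2} equals $|\Delta|^{\lambda}\sum_{\widetilde{B}_{k'}}|\alpha - bx - 2cy|^{-\lambda}$. A second substitution $u := -bx - 2cy$, valid when $c\neq 0$ (the symmetric choice $v := 2ax + by$ works when $a\neq 0$, while $a=c=0$ forces $\Delta$ to be a positive square), diagonalizes $Q$: the equation becomes $u^2 - \Delta x^2 = 4ck'$ and the weight becomes $|u+\alpha|^{-\lambda}$. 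Since $m\neq 0$ forces $u\neq -\alpha$ and $u+\alpha\in\mathbb{Z}$, only $O_{\Delta,\alpha}(1)$ representations can satisfy $|u+\alpha|\leq 2|\alpha|$, contributing $O_{\Delta,\alpha}(1)$ to the sum, while on the rest $|u+\alpha|\gtrsim |u|$; the problem is therefore reduced to bounding $\sum|u|^{-\lambda}$ over representations of $4ck'$ by the diagonal form $u^2-\Delta x^2$, precisely the sum controlled in \cite{ts}.

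The three discriminant cases now follow. For $\Delta<0$, $u^2-\Delta x^2$ is positive definite, admits at most $C_{\Delta,\varepsilon}|k|^{\varepsilon}$ representations, and only $O(1)$ of them have small $|u|$, giving boundedness for every $\lambda>0$; the Hölder step then produces $\lambda>1-p^{-1}$ in $l^p$. For $\Delta>0$ nonsquare, representations decompose into $C_{\Delta,\varepsilon}|k|^{\varepsilon}$ orbits under a fixed hyperbolic automorph along which $|u|$ grows geometrically, so the relevant series converge and yield the same threshold. For $\Delta=\delta^2$, the factorization $u^2-\delta^2x^2=(u-\delta x)(u+\delta x)$ bounds representations of any nonzero $k'$ by the divisor function, hence by $C_{\Delta,\varepsilon}|k|^{\varepsilon}$, whereas $k'=0$ admits two infinite arithmetic progressions of representations on the lines $u=\pm\delta x$. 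Now $k'=0$ corresponds exactly to $k=\Gamma$, and the congruence conditions ensure $\widetilde{B}_{0}\neq\emptyset$ precisely when $\{(m,n)\in\mathbb{Z}^{2}:P(m,n)=\Gamma\}\neq\emptyset$ (note that $\Gamma$ need not be an integer, in which case the set is trivially empty). In the empty case the offending term is vacuous and the form-case bound gives $\lambda>1-p^{-1}$; in the nonempty case I isolate the contribution of $k=\Gamma$ and apply the $l^{p}$ argument of Theorem 1, bringing $p$ into \eqref{1.1} via Hölder on the two exceptional lines, to obtain the stronger threshold $\lambda>\max\{p^{-1},1-p^{-1}\}$.

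The main obstacle I anticipate lies in the positive nonsquare case: although \cite{ts} treats the unshifted weight $|u|^{-\lambda}$, the weight that appears here is $|u+\alpha|^{-\lambda}$, and I must verify that no orbit of the fundamental automorph is systematically trapped near the line $u=-\alpha$. Since this line meets the conic $u^{2}-\Delta x^{2}=4ck'$ in at most two real points, each orbit contains only $O(1)$ points with $|u+\alpha|$ small; but quantifying the exponential separation of the remaining orbit points from $u=-\alpha$ uniformly in $k$ requires a careful use of the spacing inherited from the fundamental unit of $\mathbb{Z}[\sqrt{\Delta}]$. Tracking the two congruences modulo $\Delta$ through each step is a secondary bookkeeping issue that costs only a $\Delta$-dependent constant and does not affect the thresholds.
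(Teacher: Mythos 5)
Your reduction follows essentially the same path as the paper's: translate and dilate so that $P(m,n)=k$ becomes $Q(m',n')=k'$ with $m'=\Delta m-\alpha$, carry the resulting shifted weight $|m'+\alpha|^{-\lambda}$, split into $|m'|\leq 2|\alpha|$ (which contributes $O(|\alpha||\Delta|^{\lambda})$ because each $m'$ admits at most two $n'$ and $|m'+\alpha|\geq 1$) and $|m'|>2|\alpha|$ (where $|m'+\alpha|\gtrsim|m'|$), invoke the uniform bounds of \cite{ts} on $\sum_{Q(m,n)=k'}|m|^{-\lambda}$, Hölder for $p>1$, and in the square-discriminant case isolate $k=\Gamma$ and run the Theorem~1 argument. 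The one place you diverge is the extra diagonalization to $u^2-\Delta x^2=4ck'$; this step is not needed, since the bounds cited from \cite{ts} (equations (40), (50), (51) there, which the paper uses directly) already apply to the general form $Q$, not merely to diagonal forms. Your extra step also introduces a spurious case distinction around $c=0$: the ``symmetric'' substitution $v=2ax+by$ does not actually match the weight, which remains $|\alpha-bx-2cy|^{-\lambda}$ regardless of which variable you use to diagonalize, so the parenthetical claim that $v$ ``works when $a\neq 0$'' would need repair. As it happens $c=0$ forces $\Delta=b^2$, so this only touches the positive-square case where one needs $a,c\neq 0$ anyway (as in Theorem~1 and Lemma~1), but it is a wrinkle the paper's more economical route avoids entirely.

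Your closing ``main obstacle'' paragraph is a non-issue that you have in fact already dispatched two paragraphs earlier. Once you observe that at most $O_{\Delta,\alpha}(1)$ representations have $|u+\alpha|\leq 2|\alpha|$ and that each such term is at most $1$, while the remaining representations satisfy $|u+\alpha|\gtrsim|u|$, the shifted sum is dominated by the unshifted sum from \cite{ts} plus a harmless $\alpha$-dependent constant; no quantitative control of the orbit of the automorph near the line $u=-\alpha$ is required, and the $\alpha$-dependence you are worried about is exactly the (acknowledged and accepted) dependence appearing in the constant $C_{\lambda,p,\Delta,\alpha}$ of the theorem. In short: correct proposal, same method, one unnecessary reduction, and one worry you need not have.
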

 
 Thus remarkably  there are polynomials of  positive square discriminant  that satisfy estimates much better than those satisfied by  forms of positive square discriminant, in particular they have estimates for $p=1$.
 Indeed the part of the theorem  regarding polynomials of positive square discriminant will be made more clear  by expressing the solvability of $P(m,n)=\Gamma $ in terms of coefficients of  $P$.  As this requires yet more notation we defer  it to   section 3.

 The  generalization has one weak point, which is that now our constants, in addition to $\lambda,p,\Delta$, depend on $\alpha.$ 
This is an issue related to  a cycle of  very difficult conjectures  in number theory regarding the concentration of lattice points on short arcs of conics. Here we state the  conjectures  most relevant to us. For other conjectures in this circle and relations between them as well as their connections to other outstanding problems in analysis such as sum-product sets,  exponential sums, squares in arithmetic progressions see \cite{cg, cu}. For results on extensions of these conjectures to higher dimensions, which turn out to be more tractable as with many other problems regarding lattice points on surfaces, and their applications to the eigenfunctions of the Laplacian on torii see   \cite{br1,br2}.

\begin{conjecture} Let $N\in \mathbb{N}$ and $0<\eta<1/2$. Then  the set
 \begin{equation*}
 \{(m,n)\in \mathbb{Z}^2:  m^2+n^2=N,    \ \    |n|<N^{\eta}\}
 \end{equation*}
 has cardinality bounded by a constant  $C_{\eta}$ independent of $N$.
 \end{conjecture}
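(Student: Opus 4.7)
The plan is to reformulate the problem via Gaussian integers. Each representation $(m,n)$ of $N$ as a sum of two squares corresponds to a Gaussian integer $z=m+in$ with $N(z)=z\bar z=N$, and the constraint $|n|<N^{\eta}$ is equivalent to $z$ lying in a short arc of the circle $|z|=\sqrt{N}$ of angular width $O(N^{\eta-1/2})$ near the real axis. Thus the conjecture becomes a statement that at most $O_{\eta}(1)$ Gaussian integers of norm $N$ can have argument within $N^{\eta-1/2}$ of $\{0,\pi/2,\pi,3\pi/2\}$.

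Given $k$ such representations $z_1,\ldots,z_k$, I would form the pairwise products $w_{ij}:=z_i\bar z_j$, which are Gaussian integers of norm exactly $N$ with real part $m_im_j+n_in_j\geq N/2$ and imaginary part $n_im_j-m_in_j=O(N^{1/2+\eta})$. Provided the $z_i$ are pairwise non-associate, this produces $\binom{k}{2}$ distinct Gaussian integers of norm $N$ in the strip $|\mathrm{Im}(w)|\leq CN^{1/2+\eta}$, and a matching upper bound for points in this strip would force $k\leq C_{\eta}$. To attack the upper bound I would exploit the explicit factorization structure: writing $N=2^{a_0}\prod_{p\equiv 1(4)}p^{a_p}\prod_{q\equiv 3(4)}q^{2b_q}$ and fixing Gaussian primes $\pi_p$ above each $p\equiv 1 \pmod 4$, every $z$ with $N(z)=N$ has the form $z=u\prod_p\pi_p^{c_p}\bar\pi_p^{a_p-c_p}$ with $u$ a unit and $0\leq c_p\leq a_p$, and the short-arc constraint translates into the diophantine inequality
\[\Big|\sum_p(2c_p-a_p)\arg(\pi_p)\Big|<CN^{\eta-1/2}\pmod{\tfrac{\pi}{2}}.\]
Counting solutions of this inhomogeneous small linear form in the arguments of the Gaussian primes dividing $N$ is then the entire content of the conjecture.

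The main obstacle is precisely this final step. Uniform bounds independent of $N$ (and therefore of the set of primes dividing $N$) on the number of tuples $(c_p)$ satisfying such an inequality are essentially transcendence-theoretic, demanding quantitative nondegeneracy for the $\arg(\pi_p)$ far beyond what current diophantine approximation delivers; the divisor bound gives only $O_{\varepsilon}(N^{\varepsilon})$, which is already tight along special sequences of $N$ with many small prime factors. A genuine resolution seems to require either showing that the $\arg(\pi_p)$ cannot conspire to cluster many products near the real axis, or an approach that bypasses the factorization entirely. For the purposes of the present paper I would therefore not attempt the conjecture in full, but only partial results: restricted ranges of $\eta$, averaged statements over $N$, or conditional forms sufficient to remove the dependence of the constants in \eqref{1.3} on $\alpha$.
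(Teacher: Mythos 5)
This statement is labeled \emph{Conjecture}~1 in the paper and is left open there; the paper offers no proof, so there is no argument to compare yours against. You correctly recognize this: your writeup is an analysis of the difficulty rather than a proof, and your conclusion---that only partial, averaged, or conditional results are within reach---matches what the paper actually does. The paper's own remarks on the conjecture are: it is clear for $\eta\le 1/4$ (by the Cilleruelo--Cordoba arc bound), only logarithmic gains are known, due to Chan \cite{thc1,thc2}, for $N$ a square plus a much smaller square, and Bourgain--Rudnick \cite{br2} give a simple argument yielding the conjecture outside a sparse exceptional set of $N$. The paper's new contributions in this direction are Theorems~3, 5, and~6, which address weighted, asymptotic, and special-$N$ versions of the question---not Conjecture~1 itself.

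Your Gaussian-integer reformulation and the diophantine inequality in the $\arg(\pi_p)$ are a reasonable way to see why the problem is hard, though note a small slip: the pairwise products $w_{ij}=z_i\bar z_j$ have norm $N^2$, not $N$. That amplification trick (from $k$ points on a radius-$\sqrt N$ short arc to $\binom{k}{2}$ points near the real axis on a radius-$N$ circle) is essentially the Cilleruelo--Cordoba determinant argument, and it does give the $\eta<1/4$ range; it does not by itself push beyond $1/4$ since the target problem at scale $N^2$ is of the same type. One further caution: the divisor bound $O_\varepsilon(N^\varepsilon)$ is tight for the \emph{total} representation count $r_2(N)$, but it is not known to be tight for the short-arc count---if it were, Conjecture~1 would be false. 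Your instinct that the obstruction is quantitative nondegeneracy of $\{\arg\pi_p\}$, of a transcendence-theoretic flavor, is exactly the view taken in \cite{cg,cu}, which the paper cites for this circle of ideas.
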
 
 This is clear for $\eta\leq 1/4$, but beyond this only logarithmic improvements for $N$  a square plus a much smaller square are known by the work of Chan \cite{thc1,thc2}.  There is also a simple argument due to  Bourgain and Rudnick, see \cite{br2}, yielding   the conjecture  with the possible exception of a sparse set of $N$. Below we delve deeper into this conjecture, but now to show its relation to Theorem 2 we explicitly compute for the phase polynomials $P_j(m,n)=m^2+n^2+2jm$ and  $f$  the point mass at $0$  
 
\begin{equation*}\label{1.4}
\begin{aligned}
\|\mathcal{I}_{1, P_j}f\|_1= \sum_{\substack{(m,n)\in\mathbb{Z}_*\times \mathbb{Z}\\  P_j(m,n)=0}}\frac{1}{|m|^{\lambda}} = \sum_{\substack{(m,n)\in\mathbb{Z}_*\times \mathbb{Z}\\  (m+j)^2+n^2=j^2}}\frac{1}{|m|^{\lambda}} =  \sum_{\substack{(m,n)\in\mathbb{Z}-\{j\}\times \mathbb{Z}\\  m^2+n^2=j^2}}\frac{1}{|m-j|^{\lambda}}.
\end{aligned}
\end{equation*}
 For the polynomials $P_j$ the value $\alpha=2cd-be=4j$ obviously depends on $j$ but the discriminant $\Delta=4$ is independent of it. Therefore  we are required to bound the last sum above
  independently of $j$ in order to obtain estimates independent of $j$ for $\|\mathcal{I}_{\lambda, P_j}f\|_1$.  The main contribution to that sum  comes from $(m,n)$ with $m$ close to $j$, and these are exactly the points with $|n|$ small. Therefore  Conjecture 1 for any $\eta>1/4$ immediately implies  the  boundedness of that sum independent of $j$ for any $\lambda$.   So we may wiev this problem as a weaker form of Conjecture 1: while Conjecture 1  claims that lattice points  $(m,n)$ with $|n|$ small are finite,   our problem  requires such points to be merely sparse.

  Applying the large sieve via quadratic residues and the prime number theorem in arithmetic progressions,  we solve this problem for  $\lambda>1/2$. Indeed we  consider not just circles but a rather general class of conics that suffices to handle all polynomials of negative or positive nonsquare discriminant case. 
   
   \begin{theorem}
   Let $a$ be an integer such that $-a$ is a nonsquare, and $\lambda>1/2$. Let $\tau\in\mathbb{Z}$. Then the sum  
   \begin{equation}\label{1.5}
     \sum_{\substack{(m,n)\in\mathbb{Z}-\{\tau\}\times \mathbb{Z}\\  am^2+n^2=N}}\frac{1}{|m-\tau|^{\lambda}}
     \end{equation}
    is bounded by a constant independent of $N,\tau$. 
     \end{theorem}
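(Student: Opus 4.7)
The plan is to decompose \eqref{1.5} dyadically in $|m-\tau|$ and bound each dyadic piece via Montgomery's large sieve, exploiting that $-a$ is a nonresidue modulo a density-$\tfrac{1}{2}$ set of primes. Write
\[\mathcal{N}_R := \#\{(m,n)\in \mathbb{Z}^2 : am^2+n^2=N,\ 0 < |m-\tau|\leq R\},\]
so that \eqref{1.5} $\leq \sum_{j\geq 0} 2^{-j\lambda}\,\mathcal{N}_{2^{j+1}}$. It suffices to prove $\mathcal{N}_R \leq C_a \sqrt{R}\log R$ uniformly in $N,\tau$, since the resulting series $\sum_j j \cdot 2^{j(1/2-\lambda)}$ converges precisely when $\lambda>1/2$.

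To bound $\mathcal{N}_R$ I apply Montgomery's large sieve to the set $S$ of integers $m\in(\tau-R,\tau+R)$ that admit some $n\in\mathbb{Z}$ with $am^2+n^2=N$; clearly $\mathcal{N}_R \leq 2|S|$. At each odd prime $p\nmid 2a$ with $\left(\tfrac{-a}{p}\right)=-1$, a short count on the conic over $\mathbb{F}_p$ shows that $S$ occupies at most $A(p)\leq (p+3)/2$ residue classes modulo $p$: if $p\nmid N$, the conic has $p+1$ affine points and at most two $m$ satisfy $am^2\equiv N$, giving $(p+1+s)/2$ admissible $m$-residues with $s\in\{0,2\}$; if $p\mid N$, the nonresidue hypothesis on $-a$ forces $m\equiv n\equiv 0\pmod p$, leaving a single class. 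In either case the Montgomery weight $\omega(p):=p-A(p)\geq (p-3)/2$ satisfies $\omega(p)/(p-\omega(p))\geq \tfrac{1}{2}$ for $p\geq 11$.

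Quadratic reciprocity reduces $\left(\tfrac{-a}{p}\right)=-1$ to a union of residue classes modulo $8|a|$, nonempty precisely because $-a$ is not a square; the prime number theorem in arithmetic progressions then gives $\pi_{\mathrm{good}}(Q)\sim Q/(2\log Q)$. Retaining only singleton squarefree divisors in Montgomery's weight yields $G(Q)\geq c_a Q/\log Q$ for $Q\geq Q_0(a)$, and the large sieve inequality $|S|\leq (2R+Q^2)/G(Q)$, with $Q=\lceil\sqrt{R}\,\rceil$, produces $\mathcal{N}_R\leq 2|S|\leq C_a\sqrt{R}\log R$ for all $R\geq R_0(a)$; the finitely many smaller $R$ contribute $O_a(1)$ to the dyadic sum.

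I expect the main obstacle to be the uniformity of the constants in $N$ and $\tau$. The parameter $\tau$ is handled automatically by the translation invariance of the large sieve. The potential worry about $N$ is that "good" primes dividing $N$ could degrade the local counts; the resolution is the observation above that such primes in fact \emph{strengthen} the sieve, confining $S\pmod p$ to $\{0\}$. This is precisely what ensures the implicit constant depends only on $a$, as required.
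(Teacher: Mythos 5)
Your proposal is correct and follows essentially the same strategy as the paper: a dyadic decomposition in $|m-\tau|$, a large-sieve estimate on the set of admissible $m$ in each dyadic window using primes $p$ with $\left(\tfrac{-a}{p}\right)=-1$, and quadratic reciprocity plus the prime number theorem in arithmetic progressions to secure a positive density of such primes, yielding the $\sqrt{R}\log R$ bound that makes the dyadic sum converge precisely for $\lambda>1/2$. The one genuine difference is in the local count modulo $p$ when $p\nmid N$. The paper realizes $\{N-am^2 \ (\text{mod}\  p): m\not\equiv 0\}$ as a translate of the set of quadratic nonresidues and invokes Perron's theorem on how many shifted nonresidues remain nonresidues, giving at least $(p-5)/2$ excluded residue classes. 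You instead count $\mathbb{F}_p$-points on the conic $am^2+n^2=N$ directly: since $-a$ is a nonresidue the projective conic has no points at infinity, hence exactly $p+1$ affine points, and partitioning these by $m$-fiber yields $A(p)=(p+1+s)/2$ with $s\in\{0,2\}$, so at least $(p-3)/2$ excluded classes. Your argument is more self-contained (no appeal to Perron) and marginally sharper, but both produce the $\sim p/2$ exclusion the sieve needs, so the resulting estimate is the same. The treatment of $p\mid N$ is identical in substance: in both, the nonresidue hypothesis confines $m$ to the class $0$, so these primes only strengthen the sieve, which is exactly what secures uniformity in $N$.
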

  Applying this result immediately yields
  \begin{theorem}
   Let $f\in l^p(\mathbb{Z})$  where $1\leq p< \infty$. Let $P$
   be an integral  bivariate   quadratic  polynomial with negative or positive nonsquare discriminant. Then   for $\lambda >1-(2p)^{-1}$  the  operator \eqref{1.1}
   satisfies
   \begin{equation*}\label{}
   \|\mathcal{I}_{\lambda}f\|_p \leq C_{\lambda,p,\Delta}\|f\|_p.
   \end{equation*}
   \end{theorem}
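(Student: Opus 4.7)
The plan is to reduce, via the framework of the introduction, to a uniform bound on
\[
S_k(\lambda_1) := \sum_{(m,n)\in A_k}|m|^{-\lambda_1}
\]
for a suitable auxiliary exponent $\lambda_1 > 1/2$, and then to invoke Theorem 3. For $p=1$ I take $\lambda_1 = \lambda > 1/2$, so that $\|\mathcal{I}_\lambda f\|_1 \leq \sup_k S_k(\lambda)\,\|f\|_1$ as in the introduction. For $p > 1$ I split $|m|^{-\lambda} = |m|^{-(\lambda-\delta)}\cdot |m|^{-\delta}$ for some $\delta > 1/p'$ and apply Hölder on the inner sum, obtaining
\[
|\mathcal{I}_{\lambda,P}f(n)|^p \leq \Bigl(\sum_{m\in\mathbb{Z}_*}|m|^{-\delta p'}\Bigr)^{p/p'}\sum_{m\in\mathbb{Z}_*}\frac{|f(P(m,n))|^p}{|m|^{(\lambda-\delta)p}}.
\]
Summation in $n$ and partitioning via the level sets $A_k$ then reduce the problem to bounding $S_k(\lambda_1)$ uniformly in $k$, with $\lambda_1 = (\lambda-\delta)p$. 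Choosing $\delta$ slightly larger than $1/p'$, the constraint $\lambda_1 > 1/2$ is realizable precisely when $\lambda > 1/p' + 1/(2p) = 1 - 1/(2p)$, the threshold in the statement.

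For the counting sum $S_k(\lambda_1)$, since $\Delta \neq 0$ the standard completion of the square yields
\[
P(m,n) = Q(m - \alpha/\Delta,\ n - \beta/\Delta) + \Gamma,
\]
and multiplying through by $\Delta^2$ produces the integer identity $Q(\Delta m - \alpha,\ \Delta n - \beta) = \Delta^2(P(m,n) - \Gamma)$. Setting $M = \Delta m - \alpha$ and $N = \Delta n - \beta$ converts $P(m,n)=k$ into $Q(M,N) = \Delta^2(k-\Gamma)$; moreover $|m|^{-\lambda_1} = |\Delta|^{\lambda_1}|M+\alpha|^{-\lambda_1}$ and the condition $m \neq 0$ becomes $M \neq -\alpha$. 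Since $\Delta$ is negative or positive nonsquare, both $a \neq 0$ and $c \neq 0$ necessarily hold (else $\Delta = b^2$ would be a nonnegative square), so the identity $4c\,Q(M,N) = (bM+2cN)^2 - \Delta M^2$ is usable. The injective change of variables $X = bM+2cN$, $Y = M$ then rewrites the equation as $(-\Delta)Y^2 + X^2 = 4c\Delta^2(k-\Gamma)$ and the summand as $|Y+\alpha|^{-\lambda_1}$; dropping the residue condition $X \equiv bY \pmod{2c}$ inherited from the substitution only enlarges the sum.

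Theorem 3 now applies directly with parameter $-\Delta$ (so that $-(-\Delta) = \Delta$ is a nonsquare under either discriminant hypothesis) and shift $\tau = -\alpha$, bounding the sum by a constant depending only on $\Delta$ and $\lambda_1$---in particular independent of $k$ and of $\alpha$. Tracking constants back through the Hölder step yields $\|\mathcal{I}_\lambda f\|_p \leq C_{\lambda,p,\Delta}\|f\|_p$, as claimed. The main obstacle is already handled by Theorem 3, whose uniformity in $\tau$ is exactly what eliminates the $\alpha$-dependence present in Theorem 2; the work here is reduced to calibrating the Hölder exponents to reach the sharp threshold $\lambda > 1-1/(2p)$ and verifying that the two successive affine substitutions preserve the shift in precisely the one-variable form $|Y - \tau|^{-\lambda_1}$ that Theorem 3 requires.
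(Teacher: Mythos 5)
Your proposal is correct and follows essentially the same route as the paper: reduce via the affine substitution $M=\Delta m-\alpha$, $N=\Delta n-\beta$ to representation by the form $Q$, then use the identity $4cQ(M,N)=(bM+2cN)^2-\Delta M^2$ to pass to the diagonal form $-\Delta Y^2+X^2$, and invoke Theorem 3 with $a=-\Delta$ and shift $\tau=-\alpha$, whose uniformity in $\tau$ removes the $\alpha$-dependence from Theorem 2. Your Hölder split with an auxiliary $\delta$ slightly larger than $1/p'$ is just a notationally cleaner rephrasing of the paper's choice $\lambda''=\lambda-(1-(2p)^{-1})$; both land on the same threshold $\lambda>1-(2p)^{-1}$, and your explicit remark that dropping the congruence condition inherited from $(M,N)\mapsto(M,bM+2cN)$ only enlarges the sum makes precise what the paper leaves implicit.
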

  
For a similar result on polynomials of positive square discriminant we would need to cover the case $-a$ a square,  but   unfortunately our method does not extend there.  Such a result would constitute a weaker version of  a well known analogue of Conjecture 1   posed by I. Ruzsa. 
   \begin{conjecture} Let $N\in \mathbb{N}$ and $0<\eta<1/2$. Then  the set
    \begin{equation*}
    \{(m,n)\in \mathbb{Z}^2:  mn=N,    \ \      |n-N^{1/2}|<N^{\eta}\}
    \end{equation*}
    has cardinality bounded by a constant  $C_{\eta}$ independent of $N$.
    \end{conjecture}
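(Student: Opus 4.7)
The plan is to mimic the large-sieve strategy of Theorem 3, but first to recast the problem as a lattice-point count on a conic. Writing $X=m+n$ and $Y=n-m$, the hypothesis $mn=N$ becomes $X^{2}-Y^{2}=4N$, and the window $|n-N^{1/2}|<N^{\eta}$ translates, via the Taylor expansion of $N/(N^{1/2}+s)$ for $|s|<N^{\eta}$, into $|Y|<2N^{\eta}+O(N^{2\eta-1/2})$, which is $\leq 3N^{\eta}$ for $\eta<1/2$ and $N$ large. So Conjecture 2 is equivalent to the claim that the hyperbola $X^{2}-Y^{2}=4N$ carries only $O_{\eta}(1)$ lattice points with $|Y|<3N^{\eta}$ and $X>0$, placing it in exactly the same geometric frame as Conjecture 1.

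Next I would try to sieve the admissible $Y$-values modulo small primes $p\nmid 2N$. The condition $X^{2}\equiv Y^{2}+4N\pmod{p}$ forces $Y^{2}+4N$ to be a quadratic residue, ruling out roughly half the residue classes. Feeding this into the large sieve does beat the trivial count over $Y\in[-3N^{\eta},3N^{\eta}]$, but only by logarithmic factors --- nowhere near the $N^{\eta}$ savings needed to reach a constant. This is precisely the obstruction that forced the hypothesis ``$-a$ nonsquare'' in Theorem 3: for $-a=\square$ the conic factors over $\mathbb{Q}$, the solvability character is trivial, and the quadratic-residue mechanism collapses.

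A second natural attempt is to exploit multiplicative structure directly. If $n_{1}<n_{2}<\cdots<n_{r}$ are divisors of $N$ in the short window, each ratio $n_{j}/n_{1}$ is an $S$-unit of $\mathbb{Q}$, with $S$ the prime divisors of $N$, lying in the tiny interval $(1,\,1+2N^{\eta-1/2})$. The Evertse--Schlickewei--Schmidt theory of $S$-unit equations and $p$-adic Baker estimates on $\sum_{p\in S}a_{p}\log p$ bound the number of such $S$-units, but only in terms of $|S|=\omega(N)$, which is not $O_{\eta}(1)$. The same defect afflicts sum-product or Freiman-type attempts: all known additive-combinatorial inputs that I am aware of give bounds depending on the number of primes in play.

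The main obstacle, and in my view the crux of the problem, is that divisors of $N$ close to $N^{1/2}$ can cluster heavily for multiplicatively rich $N$; for the primorial $N=\prod_{p\leq y}p$ one has $d(N)=2^{\pi(y)}$, and no elementary argument keeps this under control in short intervals. Any successful proof must beat the divisor bound $d(N)=N^{o(1)}$ uniformly in $N$, not merely uniformly in $\omega(N)$. I would therefore not expect a short proof from current sieve or diophantine-approximation methods, and I anticipate that genuine progress requires a new mechanism that breaks the multiplicative symmetry of the divisor set --- which is exactly why Conjecture 2, like Conjecture 1, is regarded as very hard.
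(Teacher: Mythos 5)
You have not proved the statement, but no proof was to be expected: this is Ruzsa's conjecture, stated in the paper as Conjecture~2 precisely as an open problem (``trivial for $\eta\leq 1/4$, and is not known for any larger $\eta$''), so there is no proof in the paper to compare against, and your refusal to claim one is the correct call. Your analysis is accurate and in fact aligns closely with the paper's own viewpoint. The substitution $X=m+n$, $Y=n-m$ turning $mn=N$, $|n-N^{1/2}|<N^{\eta}$ into lattice points on $X^{2}-Y^{2}=4N$ with $|Y|\lesssim N^{\eta}$ is exactly the reduction the paper uses to deduce the divisor bound \eqref{1.8} from the hyperbola bound \eqref{1.7} in Theorem~6. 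Your diagnosis of why the Theorem~3 sieve cannot be transplanted — for the factorable conic ($-a$ a square) the quadratic-residue obstruction becomes vacuous — is precisely the reason the paper restricts Theorem~3 to $-a$ nonsquare and states that its method ``does not extend there.'' The one thing worth adding to your account is the nature of the known partial progress: rather than sieving, the paper (following Chan \cite{thc1,thc2}, improved via Bugeaud's effective bound on simultaneous Pell equations) obtains, for $N$ in the special set $F$ of integers very close to a perfect square, at most $10$ divisors in a window of length $2N^{1/4}\log^{\kappa/4}N$ — a logarithmic enlargement of the trivial $N^{1/4}$ range, valid only for such special $N$; and Theorem~5 shows what Schmidt's (ineffective) simultaneous approximation theorem can give when the offsets $h_i$ are fixed in advance. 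Both confirm your conclusion that current sieve, $S$-unit, and diophantine-approximation technology yields bounds depending on quantities like $\omega(N)$ or applying only to sparse $N$, and that the full conjecture remains out of reach.
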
 
   This conjecture too is trivial for $\eta\leq 1/4$, and is not known for any larger $\eta$. There is logarithmic improvement by Chan in \cite{thc1,thc2}  for   $N$  a square minus a much smaller square, and an on average version of  the question was   studied in \cite{cu}.

 These theorems   establish a very strong connection between discrete fractional integrals and concentration of lattice point on short arcs of conics. This latter topic is connected to diophantine approximation, as can be seen from the works  \cite{br1,thc1,thc2,jt}.  Diophantine approximation  is deeply interrelated with  the existence and boundedness of solutions of certain  diophantine equations, such as  Pell and Thue equations.  Algebraic numbers lack good rational
 approximation, and this fact is encapsulated by the two main theorems of diophantine approximation, that is Roth's theorem \cite{r}, and Schmidt's theorem \cite{sch}.  These theorems are sharp but ineffective, and over the last 50 years tremendous effort has been spent on proving effective versions of these theorems, and using them to study questions regarding simultaneous Pell equations and  diophantine $m$-tuples. For  a  starting point to this literature we recommend the articles \cite{chud,ric}. Despite the vast literature we are still far from strong effective results.

 Our next theorem and its proof highlight the  connections between lattice point problems and diophantine approximation most clearly.   Specifically we will use Schmidt's theorem on simultaneous approximation \cite{sch} to obtain a finiteness result for lattice points on circles.
 
 \begin{theorem}
 Let $0< h_1<h_2<\ldots <h_l$ be integers with $l\geq 5,$ and let 
 \[S:=\{N\in \mathbb{N}:  \  N=R^2+r, \ \     R\in \mathbb{N},   \ \  |r|\leq R^{\frac{1}{2}-\rho}                           \}     \]
  where $0<\rho\leq 1/2$  and $2\rho(l-1)>1.$
Then the subset $S'\subseteq S$ of all $N$ such that for each $1\leq i\leq l$, there exist  $n_i\in \mathbb{N}$ with $(R-h_i)^2+n_i^2=N$ is finite.
 \end{theorem}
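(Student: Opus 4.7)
The plan is to convert the system defining $S'$ into a simultaneous Diophantine approximation problem and then invoke Schmidt's theorem.

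First, if $N \in S'$, writing $N = R^2 + r$ with $|r| \leq R^{1/2 - \rho}$, each equation $(R - h_i)^2 + n_i^2 = N$ yields $n_i^2 = 2 h_i R - h_i^2 + r$, with the same $r$ across $i$. Subtracting the $i = 1$ equation from the $i$th gives
\[
h_1 n_i^2 - h_i n_1^2 \;=\; (h_1 - h_i)(h_1 h_i + r) \;=\; O(R^{1/2 - \rho})
\]
for $R$ large. Factoring the left-hand side as $(\sqrt{h_1}\, n_i - \sqrt{h_i}\, n_1)(\sqrt{h_1}\, n_i + \sqrt{h_i}\, n_1)$ and noting that the second factor has size $\Theta(R^{1/2})$ because each $n_j \asymp R^{1/2}$, one obtains, for each $i = 2, \ldots, l$,
\[
\bigl|\sqrt{h_1}\, n_i - \sqrt{h_i}\, n_1\bigr| = O(R^{-\rho}), \qquad \bigl|n_i - \sqrt{h_i/h_1}\, n_1\bigr| = O\bigl(n_1^{-2\rho}\bigr),
\]
the second bound using $n_1 \asymp R^{1/2}$.

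Next, I would apply Schmidt's simultaneous approximation theorem to the $l - 1$ real algebraic numbers $\alpha_i := \sqrt{h_i/h_1}$, $i = 2, \ldots, l$. When $1, \alpha_2, \ldots, \alpha_l$ are $\mathbb{Q}$-linearly independent, Schmidt's theorem asserts that for any $\epsilon > 0$ only finitely many positive integer tuples $(n_1, \ldots, n_l)$ satisfy $\max_{i \geq 2}|n_1 \alpha_i - n_i| \leq n_1^{-1/(l - 1) - \epsilon}$. The hypothesis $2\rho(l-1) > 1$ provides slack $2\rho - 1/(l-1) > 0$, so setting $\epsilon := (2\rho - 1/(l-1))/2$ the $O(n_1^{-2\rho})$ bound above eventually falls below Schmidt's threshold. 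Thus only finitely many admissible tuples exist, and each determines $R$ uniquely (for instance via $R = (n_2^2 - n_1^2 + h_2^2 - h_1^2)/(2(h_2 - h_1))$) and hence $N$, so $|S'| < \infty$.

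The main obstacle is the potential $\mathbb{Q}$-linear dependence among $1, \alpha_2, \ldots, \alpha_l$, which I would treat via Schmidt's subspace theorem applied to the $l$ linearly independent linear forms $L_1(\mathbf{x}) = x_1$ and $L_j(\mathbf{x}) = x_j - \alpha_j x_1$, $j \geq 2$. On our solutions
\[
|L_1(\mathbf{x}) \cdots L_l(\mathbf{x})| = O\bigl(n_1^{1 - 2\rho(l-1)}\bigr) = O\bigl(\|\mathbf{x}\|^{-\delta}\bigr), \qquad \delta := 2\rho(l-1) - 1 > 0,
\]
confining solutions to finitely many proper $\mathbb{Q}$-subspaces. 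A subspace with infinitely many solutions carries an integer relation $\sum c_j n_j = 0$, and substituting $n_j \sim \alpha_j n_1$ forces $\sum c_j \alpha_j = 0$, which manifests as a coincidence among the squarefree parts of certain $h_j$'s. Within a class of indices sharing a squarefree part $d$, the asymptotics force exact rational relations $n_j = (s_j/s_i) n_i$ (with $h_j = s_j^2 d$), which on substitution into the quadratic equations pin $r$ to the value $-h_i h_j$. A class of size $\geq 3$ gives immediate contradiction from inconsistent $r$ across pairs, while configurations of several classes of size $2$ either force incompatible $r$ across classes, or leave a simultaneous Pell-type system in strictly fewer variables whose finiteness of integer solutions follows from Thue-Siegel-type theorems. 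Managing this case analysis cleanly is the chief technical task.
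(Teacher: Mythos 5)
Your overall strategy is correct and tracks the paper's closely up to a point: both derive the identity $h_i n_j^2 - h_j n_i^2 = (h_i - h_j)(h_i h_j + r)$, read it as a lattice point on a hyperbola near its asymptote of slope $\sqrt{h_i/h_j}$, get $\|\sqrt{h_i/h_j}\, n_j\| = O(R^{-\rho})$, and feed this into Schmidt's simultaneous approximation theorem using Besicovitch's linear-independence result for square roots. Where you diverge is in handling the obstruction that the $\sqrt{h_i/h_l}$ can fail to be $\mathbb{Q}$-independent, and this is where your argument has a genuine gap.

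You reach for the subspace theorem, conclude $\sum c_j\sqrt{h_j}=0$, and then (correctly — since $s_i n_j - s_j n_i$ is an integer that tends to $0$) deduce that within a squarefree class the exact relation $n_j = (s_j/s_i)n_i$ holds, pinning $r = -h_ih_j$. A class of size $\geq 3$ is then contradictory, as you say. But the remaining case — several size-$2$ classes with compatible pinnings $h_ih_j = h_kh_m$ (which can occur, e.g. $h_i h_j = 1\cdot 36 = 2\cdot 18 = h_k h_m$) — is waved off to unspecified "Thue–Siegel-type theorems." That step is not justified and is in fact the heart of the matter: once $r$ is fixed, the equations $n_i^2 = 2Rh_i - h_i^2 + r$ are not Pell equations in a free variable, and it is not clear what finiteness result you intend to invoke. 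The paper avoids this entirely by observing that when $r = -h_ih_j$ for some pair, $|r|\leq h_l^2$, which \emph{improves} the approximation from $O(R^{-\rho})$ to $O(R^{-1/2})$; this stronger bound lets Schmidt's theorem conclude using only a \emph{triple} of indices, and the hypothesis $l\geq 5$ guarantees a triple with three distinct squarefree parts avoiding the offending pair (split into the subcases $r<-h_{\lceil l/2\rceil}^2$ and $r\geq -h_{\lceil l/2\rceil}^2$). So the resolution you are missing is not a new Diophantine input but a sharper use of the inequality $|r|\leq h_l^2$ you already have in hand; supplying it would close the gap without the subspace theorem, which is heavier machinery than the problem requires.
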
  
 This theorem does not lead to any new results on Conjecture 1 for any values of $N$, but its proof makes it plain that this is because Schmidt's theorem is ineffective, which forces us to fix $h_i$ beforehand. If we knew the constant of that theorem, and if it were of appropriate size,  we would obtain Conjecture 1 for some  $\eta>1/4$ for values in $S$ with some $\rho$.  On the other hand,  it can be viewed as progress towards the study of patterns of lattice points on conics.  This study  can be conducted   on the plane, or via projections on the axes.  Within this latter framework, in the particular case of  circles,  we can rigorously  formulate the  problem   as follows.  Let $0<h_1<h_2<\ldots<h_l$ be fixed integers. Consider the set of $N\in \mathbb{N}$ such that for each $0\leq i\leq l$ we have lattice points $(m_i,n_i)$ of nonnegative coordinates satisfying $m_i^2+n_i^2=N$ and $m_0-m_i=h_i$. We would like to know   whether this set  is finite. For $l$  large this would follow from a well known conjecture,  known,  see \cite{cg},  to be equivalent to Conjecture 1.
 \begin{conjecture}
  On the circle centered at the origin with radius $\sqrt{N}, \ N\in \mathbb{N}$, an arc of length $N^{\eta}, \ \eta<1/2$ can contain at most $C_{\eta}$  lattice points, independent of $N$. 
 \end{conjecture}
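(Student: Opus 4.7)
The plan is to recast the configuration as a simultaneous diophantine approximation problem and then invoke Schmidt's theorem. Starting from $(R-h_i)^2 + n_i^2 = N = R^2+r$, expansion gives $n_i^2 = 2Rh_i - h_i^2 + r$, so $n_i \sim \sqrt{2Rh_i}$ for $R$ large. The identity
\[h_l n_i^2 - h_i n_l^2 = (h_l - h_i)(h_i h_l + r), \qquad i = 1,\ldots,l-1,\]
together with $|r| \leq R^{1/2-\rho}$, yields the quantitative bound
\[\left|\frac{n_i}{n_l} - \sqrt{\frac{h_i}{h_l}}\right| \ll \frac{1}{n_l^{1+2\rho}}\]
for $R$ sufficiently large (so $n_l \asymp R^{1/2}$). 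This is precisely simultaneous rational approximation, with common denominator $n_l$, of the $l-1$ algebraic numbers $\alpha_i := \sqrt{h_i/h_l}$ with exponent $1 + 2\rho$.

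I would then invoke Schmidt's theorem on simultaneous diophantine approximation: provided $1, \alpha_1, \ldots, \alpha_{l-1}$ are linearly independent over $\mathbb{Q}$, only finitely many integer tuples $(n_1, \ldots, n_l)$ can satisfy the above inequalities once $2\rho > 1/(l-1) + \varepsilon$ for some $\varepsilon > 0$, which is exactly what the hypothesis $2\rho(l-1) > 1$ allows. Any two of the relations $n_i^2 = 2Rh_i - h_i^2 + r$ determine $R$ and $r$, and hence $N$, from the tuple, so finitely many tuples produce finitely many $N$ and $S'$ is finite.

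The crux lies in the linear independence requirement, which can fail when several $h_i$ share a common squarefree part. Writing $h_i = s t_i^2$, $h_j = s t_j^2$ for such a pair, the analogous identity
\[t_j^2 n_i^2 - t_i^2 n_j^2 = (t_j^2 - t_i^2)(s^2 t_i^2 t_j^2 + r)\]
together with $|r| \leq R^{1/2-\rho}$ forces, for $R$ large, both the exact relation $t_j n_i = t_i n_j$ and the pinning $r = -s^2 t_i^2 t_j^2$. A squarefree class with three or more elements yields contradictory pinnings of $r$, immediately giving finiteness. Otherwise every class has size at most two and, since $l \geq 5$, the number of distinct squarefree classes $M$ satisfies $M \geq 3$; any pinnings from size-two classes fix $r$ to a bounded value (or rule out $N$ altogether), the approximation improves to $\ll n_l^{-2}$, and Schmidt's theorem applied to the $M-1 \geq 2$ numbers $\sqrt{s_i/s_M}$ (linearly independent over $\mathbb{Q}$ by the classical result on square roots of distinct squarefree integers) closes the argument.
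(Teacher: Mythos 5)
Your proposal is correct and uses the same core machinery as the paper's proof of Theorem 5: the identity $h_in_j^2 - h_jn_i^2 = (h_i-h_j)(h_ih_j+r)$ is interpreted as a simultaneous rational approximation to the algebraic numbers $\sqrt{h_i/h_l}$, Besicovitch's result supplies linear independence when the squarefree parts are distinct, and Schmidt's theorem delivers finiteness. The difference is in how the degenerate case (some $s_i=s_j$, equivalently $r=-h_ih_j$ for that pair, as you correctly note these are forced to coincide for large $R$) is organized. The paper fixes its attention on one of two explicit triples, $\{h_1,h_2,h_3\}$ or $\{h_{l-2},h_{l-1},h_l\}$, choosing between them according to whether $|r|$ exceeds $h_{\lceil l/2\rceil}^2$, and shows the chosen triple has distinct squarefree parts. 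You instead partition the $h_i$ into squarefree classes and argue: a class of size $\geq 3$ yields two incompatible pinnings of $r$; otherwise at most two per class and $l\geq 5$ give $M\geq 3$ representatives, with $r$ pinned and bounded so that the approximation sharpens to $\ll n_l^{-2}$ and Schmidt applies to $M-1\geq 2$ ratios. This is a somewhat cleaner bookkeeping of the same content and reaches the same threshold $l\geq 5$. One small wrinkle: the sentence ``the approximation improves to $\ll n_l^{-2}$'' is only justified when a size-two class exists to pin $r$; the all-singleton subcase has no pinning, but you have in fact already disposed of it in the opening generic application of Schmidt (all $s_i$ distinct, exponent $1+2\rho$, hypothesis $2\rho(l-1)>1$). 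Stating that split explicitly would make the case analysis fully self-contained.
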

This  is known for $\eta <1/4$ by the work of Cilleruelo and Cordoba \cite{cc}.  An arc containing all of $(m_i,n_i), \ 0 \leq i \leq l$ has a length not exceeding $3h_lN^{1/4}$. Therefore if this conjecture holds for any $\eta>1/4$ with $C_{\eta}\leq l$,
  this pattern cannot occur infinitely often. Even the result of Cilleruelo and Cordoba  is sufficient to see  that to repeat infinitely often these  patterns must lie close to the right end of the interval $[0,\sqrt{N}]$, for the arclength requirement can only be satisfied there.  Lastly it is known that certain patterns do repeat infinitely often, e.g  $0<1<2$ with lattice points $(4j^3-1,2j^2+2j),(4j^3, 2j^2+1), (4j^3+1,2j^2-2j)$.

 The proof of  Theorem 5 relies on obtaining simultaneous Pell equations, as does Chan \cite{thc1,thc2},  but we view them as hyperbolas with asymptotes of algebraic slope. Points on hyperbolas yield a very good simultaneous approximation to these algebraic slopes and  this reveals an immediate opportunity to apply   Schmidt's theorem.  Chan on the other hand applies  Turk's effective  bounds on solutions of simultaneous Pell equations.   We remark that the ideas used to prove  Theorem 5 can also be used to connect the lattice point problems to  uniform distribution modulo 1, an  area itself very closely connected to diophantine appoximation.  For the  very good rational approximation yielded by hyperbolas  also violates uniform distribution. But as the results so obtained are weaker  than Theorem 5 we will not state them. We further remark that in order to bound the number of squares in arithmetic progressions  the articles  \cite{bgp,bz} rely on obtaining equations of elliptic curves by  eliminating variables, much like we do for   Theorem  5. It may be possible to use the  methods there   in conjuction with  our methods  to obtain results similar to  Theorem 3 for $N$ a full square and $\lambda >1/2$. But as this result would be weaker than Theorem 3 we will not explore this possibility here.

 Our last theorem  builds upon Chan's ideas to improve  his theorems. We do this via a simpler but more efficient way of dealing with the  exceptions to  applicability of  effective results on the size of solutions of simultaneous Pell equations.  Also instead of Turk's result, we apply  a recent theorem of Bugeaud \cite{yb}  that improves upon it. 
 \begin{theorem}
Let $\kappa=1/2-\varepsilon$ where $0<\varepsilon<10^{-2}$, and  
\[E:=\{N\in \mathbb{N}:  \  N=R^2+r, \ \     R\in \mathbb{N},   \ \  |r|\leq 16e^{2\log^\kappa N }                           \}.     \]
\[F:=\{N\in \mathbb{N}:  \  N=R^2+r, \ \     R\in \mathbb{N},   \ \  |r|\leq 4e^{2\log^\kappa N }                           \}.     \]
Let $N$ be large enough, for example $\log N \geq K^{\varepsilon^{-3}},$    where  $K:=\max\{C,3\}$ and  $C$ is the absolute constant that appears in Bugeaud's theorem.  For lattice points on circles when    $N\in E$ we have
 \begin{equation}\label{1.6}
 \# \{(m,n)\in \mathbb{Z}^2:  m^2+n^2=N,    \ \    |n|\leq 6N^{1/4}\log^{\kappa/4}N \}\leq 20.
 \end{equation}
 For lattice points on hyperbolas when  $N\in E$ we have
  \begin{equation}\label{1.7}
  \# \{(m,n)\in \mathbb{Z}^2:  m^2-n^2=N,    \ \    |n|\leq 6N^{1/4}\log^{\kappa/4}N \}\leq 20.
  \end{equation}
For divisors of  $N\in F$  we have
 \begin{equation}\label{1.8}
   \# \{(m,n)\in \mathbb{Z}^2:  mn=N,    \ \   |n-N^{1/2}|\leq 2N^{1/4}\log^{\kappa/4}N \}\leq 10.  
  \end{equation}

 \end{theorem}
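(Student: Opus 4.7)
The plan for all three bounds in the theorem is a proof by contradiction. Assuming the stated count is exceeded, I will extract a simultaneous Pell system from the excess of lattice points, and then rule out solutions of the required size via Bugeaud's effective theorem \cite{yb}. All three cases share the same skeleton, and since $m^2-n^2=N$ factors as $(m-n)(m+n)=N$ the hyperbola bound \eqref{1.7} is essentially a reparametrization of the divisor bound \eqref{1.8}.

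Consider the circle bound \eqref{1.6}. Suppose $N\in E$ admits at least $21$ solutions $(m_i,n_i)$ of $m_i^2+n_i^2=N$ with $|n_i|\le 6N^{1/4}\log^{\kappa/4}N$, and write $N=R^2+r$ with $|r|\le 16e^{2\log^\kappa N}$. The arc restriction forces each $m_i$ to be of the shape $\pm R-h_i$ with $|h_i|$ of order $N^{1/4}\log^{\kappa/4}N$, so that
\[
n_i^2 = r + 2Rh_i - h_i^2.
\]
Subtracting such identities eliminates $r$, and a further pairwise elimination removes $R$; after routine manipulation a selection of three or four of the points produces a simultaneous Pell-type system
\[
X^2 - D_1Y^2 = E_1, \qquad X^2 - D_2Z^2 = E_2,
\]
whose coefficients $D_j,E_j$ are explicit polynomials in the chosen $h_i$ and in $r$, and so have height at most polynomial in $e^{2\log^\kappa N}$ and $N^{1/4}\log^{\kappa/4}N$. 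The divisor case \eqref{1.8} follows the same route with the parametrization $n_i=R+s_i$ and the divisibility $(R+s_i)\mid(r+s_i^2)$; the tighter bound $4e^{2\log^\kappa N}$ in the definition of $F$ is what the height bookkeeping demands in that case.

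Bugeaud's effective theorem applied to this simultaneous Pell system yields an upper bound on $\max(|X|,|Y|,|Z|)$ of the rough form $\exp\!\bigl(C\,\mathrm{poly}(\log H)\bigr)$, where $H$ is the height of the coefficients and $C$ is the absolute constant appearing in the definition of $K$. Since $\log H\ll \log^\kappa N$ by the previous step, this upper bound is strictly smaller than the arithmetic lower bound $|X|\gtrsim N^{1/2}$ forced by the existence of the lattice points, provided $\log N\ge K^{\varepsilon^{-3}}$. The seemingly eccentric exponent $\varepsilon^{-3}$ in the threshold, together with the factors $16,\,4,\,6,\,2$ appearing in $E,F$ and in the arc lengths, are precisely the calibration that makes Bugeaud's bound beat the arithmetic one while leaving room for the degeneracy analysis of the next paragraph.

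The principal obstacle, and the place where the improvement over Chan \cite{thc1,thc2} is realized, is handling the degenerate configurations in which Bugeaud's theorem does not directly apply: when $D_1=D_2$, when some $D_j$ is a perfect square, or when some $E_j$ vanishes. Chan addresses each such degeneracy by a separate auxiliary argument; the plan here is instead to use the abundance of lattice points guaranteed by the overcount to pass to a subfamily on which no degeneracy occurs. A short combinatorial count shows that $21$ (respectively $11$) lattice points always permit such a non-degenerate selection in the circle/hyperbola (respectively divisor) regime, and this is exactly why the sharp constants $20$ and $10$ appear in the statement.
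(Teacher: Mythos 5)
Your high-level strategy — proof by contradiction, extraction of a simultaneous Pell system from an overcount of lattice points, and a collision between Bugeaud's effective upper bound and a polynomial lower bound — is exactly the paper's approach, and the reduction of the divisor case to the hyperbola case (the paper uses the map $(m,n)\mapsto(m+n,-m+n)$ applied to $4N$, which is also why $F$ has $4$ where $E$ has $16$) matches your instinct that \eqref{1.7} and \eqref{1.8} are reparametrizations. However, your treatment of the degenerate configurations has a genuine gap. You claim that ``a short combinatorial count'' shows $21$ lattice points always permit a non-degenerate selection, covering both $E_j=0$ and $D_j$ (or $D_1D_2$) a perfect square. The first degeneracy is indeed handled by selection: the paper takes five points $m_i=R-h_i$ with $0<h_1<\cdots<h_5$ and splits on whether $r<-h_3^2$ (use $h_1,h_2,h_3$, forcing $h_ih_j+r<0$) or $r\ge -h_3^2$ (use $h_3,h_4,h_5$, forcing $h_ih_j+r>0$). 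But the perfect-square degeneracy is \emph{not} ruled out combinatorially — in principle all five $h_i$ could share the same squarefree part. The paper instead shows this is arithmetically impossible for the chosen triple: writing $h_i=s_it_i^2$, if $s_i=s_j$ then $s_i\lvert(t_in_j)^2-(t_jn_i)^2\rvert=\lvert h_{ij}\rvert<\sqrt{R}$, while the nonvanishing of $h_{ij}$ forces $t_in_j\neq t_jn_i$ and hence a lower bound $\ge 2\sqrt{R}$, a contradiction. Your plan as written replaces this arithmetic input with a counting argument that cannot stand on its own; you would need to recover this size-based contradiction to close the proof. As a smaller point, the lower bound that collides with Bugeaud's estimate is of order $N^{1/4}$ (the paper uses $\max\{h_3n_1,n_3,h_3n_2\}>(N/2)^{1/4}$), not $N^{1/2}$; the argument still closes, but your exponent is off.
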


 As is clear to the careful reader we have excluded  zero discriminant  bivariate    quadratic polynomials from our analysis.   This case seems to have three different boundedness ranges.   For  polynomials reducable to squares of  linear polynomials by completion of squares, the boundedness range is given by the Hardy-Littlewood-Sobolev theorem. For polynomials reducable to  the case $n-m^2$, the exponents of this particular polynomial, obtained as a result of   such works as \cite{ao,imsw,iw,ob,sw,sw2} are valid. The remaining polynomials are reducable to the case $m+n^2$, and for these we are able to attain the sharp exponents.  As adding all these to this article would make it somewhat cumbersome, they will be presented  in a future work.

 We remark that  for $p=\infty$ only trivial estimates, that is estimates with  $\lambda>1$, exist, therefore we do not consider this case. Also we do not prove off-diagonal estimates as  no significant extension  of those that immediately follow from our diagonal estimates is possible. To observe this let $P(m,n)=m^2+n^2$ and consider the $(p,q), \ p\neq q$ estimate
 \[   \|\mathcal{I}_{\lambda}f\|_q   \leq C_{\lambda,p,q} \|f\|_p.  \]
  It is not  possible,  by  raising $\lambda$ if necessary, to prove an estimate with $p>q$.  We see this  by just taking for  positive and small $\varepsilon$
 \[  f(k)=    \begin{cases}   j^{-p^{-1}-\varepsilon} \ \ \  &\text{if}   \ \ \  k=j^2+1, \ j\in \mathbb{N}  \\  0  \ \ \  &\text{else,} \end{cases}          \] 
 and calculating 
 \begin{equation*}
 \|\mathcal{I}_{\lambda}f\|_q ^q= \sum_{n\in \mathbb{Z}}  \Big[\sum_{m\in \mathbb{Z_*}} \frac{f(m^2+n^2)}{|m|^{\lambda}}  \Big]^q\geq \sum_{n\in \mathbb{N}}   f^q(1+n^2)\geq \sum_{n\in \mathbb{N} } n^{-(p^{-1}+\varepsilon)q}=\infty.
 \end{equation*}
 On the other hand, estimates with $p<q$, with the same $\lambda$,  obviously follow from the case $p=q$.  So here the only interesting question is whether we can lower $\lambda$ as we raise $q.$ This is not possible either,  if we estimate the same example in a different way:
 \begin{equation*}
  \|\mathcal{I}_{\lambda}f\|_q ^q= \sum_{n\in \mathbb{Z}}  \Big[\sum_{m\in \mathbb{Z_*}} \frac{f(m^2+n^2)}{|m|^{\lambda}}  \Big]^q\geq \Big [\sum_{m\in \mathbb{N}}   \frac{f(m^2+1)}{|m|^{\lambda}}\Big ]^q \geq \Big [\sum_{m\in \mathbb{N} } m^{-p^{-1}-\lambda-\varepsilon}\Big ]^q,
  \end{equation*}
 which means $\lambda\geq 1-p^{-1}$,  i.e. essentially  the same condition as in the $p=q$ case.
Hence focusing on   diagonal estimates is not  restrictive at all.

 The contents of the rest  of the article is as follows.  The next section presents, after exhibiting  arithmetic and analytic properties of quadratic forms of nonzero square discriminant in preparation, the proof of  Theorem 1.  In section  3   we reduce  representation of an integer by a polynomial via  translation and dilation to representation by the corresponding form, and use this to prove Theorem 2.  
In section 4, after we review the large sieve,  prime number theorem  in arithmetic progressions, and quadratic reciprocity we prove Theorem 3. Inserting this into the proof of Theorem 2 gives Theorem 4. The last section first proves Theorem 5 after reviewing Schmidt's theorem and a result of Besicovitch needed to implement it. Then we describe Bugeaud's recent theorem and  use it to prove Theorem 6, after which we describe how our Theorem 6 relates to Chan's work.

 %======================================================================================================================================================================================================================================================================================================================================================================================================================================================================

\section{Binary Quadratic Forms of  Positive Square Discriminant}

The aim  of this section is to prove Theorem 1. As briefly discussed in the  introduction, the proof rests on three  ingredients:  that the cardinality of $A_k, \ k\neq 0$ is small,  that for $(m,n)\in A_k, \  k\neq 0$ the value $|m|$ is distant from 0, and that we must  isolate the  representations of $0$. The first two will be achieved before the proof proper,   whereas the  last will be carried out at the beginning of the proof.
We now investigate our form   using algebra and arithmetic to obtain the first ingredient. Then we will use geometry and analysis to obtain the second.  With the first two ingredients at hand, we will be ready for presenting the proof of Theorem 1.

The key property of forms of positive square discriminant is that they factor into two disctinct linear factors of integer coefficients.  Our form is $q(m,n)= am^2+bmn+cn^2$ with $a,c\neq 0 $, and $\Delta=\delta^2$ with $ \delta \in \mathbb{N}.$  As can immediately be verified by multiplication
\begin{equation}\label{2.1}
am^2+bmn+cn^2=a\big(m+\frac{b-\delta}{2a}n\big)\big(m+\frac{b+\delta}{2a}n\big).
\end{equation}
As $(b+\delta)+(b-\delta)=2b$, they must be of the same parity, and as $(b-\delta)(b+\delta)=4ac$ they are both even.  Thus $(b\pm \delta)/2$ are integers, and we define the integers
 \[a_1:=\gcd\big(\frac{b-\delta}{2},a\big)  \ \  \ \ \ \ a_2:=\frac{a}{a_1}, \ \   \ \ \  c_1:=\frac{b-\delta}{2a_1}.  \]  
Since $\gcd(a_2,c_1)=1$, and
\[c_1\frac{b+\delta}{2}=a_2c,\]
$a_2$ divides $(b+\delta)/2$, and we define $c_2$ to be the  result of this division. Hence  we have $c=c_1c_2$, and the factorization in \eqref{2.1} becomes
\begin{equation}\label{2.2}
=a_1a_2\big(m+\frac{c_1}{a_2}n\big)\big(m+\frac{c_2}{a_1}n\big)=(a_2m+c_1n)(a_1m+c_2n).
\end{equation}

 From here it is easy to deduce information regarding representations. Let $g_1:=\gcd(a_2,c_1)$ and $g_2:=\gcd(a_1,c_2)$. The representations of $0$ are $j(c_1/g_1,-a_2/g_1),   \  j\in \mathbb{Z}$  and        $  j(c_2/g_2,-a_1/g_2),   \   j\in \mathbb{Z}$. As for $k\neq 0$, the map $(m,n)\mapsto (a_2m+c_1n,a_1m+c_2n)$ on $A_k$ is an injective mapping  into the set of elements $(u, k/u)$ where $u\in \mathbb{Z}_*$ divides $k$. Hence the cardinality of $A_k$ cannot exceed the number of divisors of $k$, and as is well known,  this is bounded by $C_\varepsilon k^{\varepsilon}$ for every $\varepsilon >0$.
 
 From analytic and geometric points of view our  forms   are very much like forms of positive nonsquare discriminant.    We investigate the set   $\{(x,y)\in  \mathbb{R}^2: q(x,y)= z \} $ for every $z\in \mathbb{R}$.  We assume that for our form $c>0$, the case $c<0$ immediately follows.  For $z=0$ the factorization above gives  two distinct lines 
\begin{equation}\label{2.3}
 a_2x+c_1y=0,    \quad \quad  \quad \quad  a_1x+c_2y=0,
 \end{equation}
and as the coeffiecients  are nonzero these lines are neither vertical nor horizontal. 
When $z>0$, the set is  a hyperbola centered at the origin with the lines in \eqref{2.3} as asymptotes. The graphs of 
\begin{equation}\label{2.4}
y=f_1(x)=\frac{-bx+\sqrt{\Delta x^2+4cz}}{2c},  \quad \quad \quad y=f_2(x)=\frac{-bx-\sqrt{\Delta x^2 +4cz}}{2c},  
\end{equation}
give the two  components of the hyperbola,  with $f_1$ lying above both asymptotes and $f_2$  lying below both of them.   
With  $z<0$ we obtain the  conjugate of the hyperbola we would have  for $-z$. Its two  components lie between the asymptotes, and points $(x,y)$ on it satisfy $x^2\geq -4cz/\Delta.$

\begin{lemma}
Let $q(x,y)= ax^2+bxy+cy^2$ be an integral binary quadratic form with $a,c\neq 0 $, and $\Delta=\delta^2$ for a natural number  $\delta$. Let $k\neq 0$ be an integer. Then $q(x,y)=k$ has at most 4 solutions $(x,y)\in \mathbb{Z}^2$ satisfying $ |x| \leq |k|^{1/4}/\delta$.
\end{lemma}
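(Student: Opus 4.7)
The plan is to solve the equation $q(x,y)=k$ for $y$ via the quadratic formula, and then exploit the fact that on the narrow strip $|x|\leq |k|^{1/4}/\delta$ the radical $\sqrt{\delta^{2}x^{2}+4ck}$ hardly varies, so two integer solutions on the same branch of the hyperbola can differ only if the corresponding $x$--values are negatives of one another.

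First I would reduce to the case $ck>0$. If $ck<0$, then $\delta^{2}x^{2}+4ck\geq 0$ forces $|x|\geq 2\sqrt{|ck|}/\delta$; combined with $|x|\leq |k|^{1/4}/\delta$ this gives $4|c||k|^{1/2}\leq 1$, which is impossible since $|c|\geq 1$ and $|k|\geq 1$. So in this case there are no real solutions in the strip. When $ck>0$, replacing $q$ by $-q$ and $k$ by $-k$ leaves the integer solution set unchanged, so we may as well assume $c>0$ and $k>0$.

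Next I would write every real solution as
\[
y=\frac{-bx+\epsilon\, s(x)}{2c},\qquad s(x):=\sqrt{\delta^{2}x^{2}+4ck},\qquad \epsilon\in\{-1,+1\},
\]
the two choices of $\epsilon$ corresponding to the two branches $f_{1},f_{2}$ from \eqref{2.4}. The central estimate is the variation bound: for $|x|\leq |k|^{1/4}/\delta$,
\[
0\leq s(x)-s(0)=\frac{\delta^{2}x^{2}}{s(x)+2\sqrt{ck}}\leq \frac{|k|^{1/2}}{4\sqrt{ck}}=\frac{1}{4\sqrt{c}}\leq \frac{1}{4},
\]
using $|c|\geq 1$.

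Finally, given two integer solutions $(x_{1},y_{1})$ and $(x_{2},y_{2})$ with $|x_i|\leq |k|^{1/4}/\delta$ on the \emph{same} branch (same $\epsilon$), one has
\[
2c(y_{1}-y_{2})+b(x_{1}-x_{2})=\epsilon\bigl(s(x_{1})-s(x_{2})\bigr).
\]
The left-hand side is an integer, while the right-hand side has absolute value at most $1/4$ by the variation bound, so both sides vanish. In particular $s(x_{1})=s(x_{2})$, which is equivalent to $x_{1}^{2}=x_{2}^{2}$, i.e.\ $x_{1}=\pm x_{2}$. Hence each branch carries at most two integer points in the strip (one with $x=x_{0}\geq 0$ and possibly one with $x=-x_{0}$), and the two branches together contribute at most $4$ integer solutions. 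The only step one might worry about is the sign bookkeeping in the reduction to $ck>0$, and it is entirely routine.
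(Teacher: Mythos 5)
Your proof is correct and, at its core, takes the same approach as the paper: both hinge on the observation that over the strip $|x|\le |k|^{1/4}/\delta$ the radical $\sqrt{\delta^2x^2+4ck}$ varies by less than a small absolute constant, so integer points on the same branch of the conic are forced to have $x_1^2=x_2^2$. The paper phrases this geometrically (the branch stays inside a thin strip around its tangent line at $x=0$, and such a strip contains at most one of the lattice lines $2cy+bx=j$, each line meeting the conic at most twice), whereas you phrase it arithmetically via the identity $2cy_i+bx_i=\epsilon\,s(x_i)$ and the fact that an integer of absolute value below $1$ vanishes — these are the same estimate dressed differently. Your treatment of the sign reduction is if anything slightly cleaner: you handle $k<0$ and $c<0$ in one stroke through the $ck<0$ case, spelling out explicitly why $4|c||k|^{1/2}\le 1$ is impossible, where the paper only gestures at it.
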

\begin{proof} 
 
 We assume $c>0$ as the case $c<0$ follows from this by considering $-q$ and $-k$.
If $k<0$, as we remarked any solution to 
 $q(x,y)=k$ satisfies $x^2\geq -4ck/\Delta$, that is, there are no solutions of the type asked for in the lemma.  So it remains to consider positive integers $k.$
 
  The solutions we are looking for lie on the  graphs of the functions $f_1,f_2$ in \eqref{2.4}. As these graphs are disjoint
any of these solutions can lie on only one of these graphs. 
The lines 
\[l_1(x)=-\frac{b}{2c}x+\sqrt{\frac{k}{c}}, \quad \quad l_2(x)=-\frac{b}{2c}x-\sqrt{\frac{k}{c}} \]
are respectively tangent to $f_1,f_2$ at $x=0$.
   We will  prove that  $f_1,f_2$  stay very close to these lines for $|x| \leq |k|^{1/4}/\delta$ . The differences   $|f_i(x)-l_i(x)|, \ i=1,2$ for such $x$  are bounded   by
\begin{equation*}
\begin{aligned}
\frac{\sqrt{\Delta x^2+4ck}}{2c}-\sqrt{\frac{k}{c}}&=  \Big(\frac{{\Delta x^2}}{4c^2} +\frac{k}{c}\Big)^{1/2}-\Big(\frac{k}{c}\Big)^{1/2} \\ &=\frac{{\Delta x^2}}{4c^2} \cdot \Big[\Big(\frac{{\Delta x^2}}{4c^2} +\frac{k}{c}\Big)^{1/2}+  \Big(\frac{k}{c}\Big)^{1/2} \Big]^{-1} \\ &\leq \frac{{\Delta x^2}}{4c^2} \cdot \Big[ {2} \Big(\frac{k}{c}\Big)^{1/2} \Big]^{-1} \\ & \leq \frac{1}{8c^{3/2}}.
\end{aligned}
\end{equation*}
Therefore our solutions satisfying $y=f_i(x)$ lie inside the   set
$S_i:=\{(x,y)\in \mathbb{R}^2:  |y-l_i(x)|  \leq {1}/{8c^{3/2}}    \} $
for $i=1,2.$ These two sets are clearly disjoint.

Yet, if $(m,n)\in \mathbb{Z}^2$, then $2cn+bm=j\in \mathbb{Z} $, which means 
\[n=-\frac{b}{2c}m+\frac{j}{2c}.\]
Therefore every element $(m,n)\in \mathbb{Z}^2$ lies on exactly one of the collection of parallel lines 
\[\Big\{(x,y)\in \mathbb{R}^2:  y=-\frac{b}{2c}x+\frac{j}{2c}\Big\}.\]
 But the sets $S_1,S_2$ each can contain at most one line from this collection. As a  line can  intersect $q(x,y)=k$ at most twice,  we have at most 4 solutions.

\end{proof}

We are now ready to present the proof of Theorem 1.   After removing the representations of $0$, we will apply the Hölder inequality to reduce to a sum of the type \eqref{1.2}. Then mobilizing what we uncovered from our investigations in this section, we will obtain the desired conclusion.

\begin{proof}
We start with separating the representations of $0$ and applying the Hölder inequality
\begin{equation*}
\begin{aligned}
\|\mathcal{I}_{\lambda}f\|_{p}^p&\leq \sum_{n\in \mathbb{Z}}\Big[\sum_{\substack{m\in \mathbb{Z}_*\\ q(m,n)=0}} \frac{|f(0)|}{|m|^{\lambda }} +\sum_{\substack{m\in \mathbb{Z}_*\\ q(m,n)\neq 0}} \frac{|f(q(m,n))|}{|m|^{\lambda }}  \Big]^p \\ &\leq 2^{p-1}|f(0)|^p \sum_{n\in \mathbb{Z}}\Big[\sum_{\substack{m\in \mathbb{Z}_*\\ q(m,n)=0}} \frac{1}{|m|^{\lambda }} \Big]^p +2^{p-1} \sum_{n\in \mathbb{Z}}\Big[\sum_{\substack{m\in \mathbb{Z}_*\\ q(m,n)\neq 0}} \frac{|f(q(m,n))|}{|m|^{\lambda }}\Big]^p  .
\end{aligned}
\end{equation*}
 We will handle the first sum now.  As we assumed $a,c\neq 0$, there can be at most two solutions  $(m,n)\in \mathbb{Z}_*\times \mathbb{Z}$ to $q(m,n)=0$  when one of the entries is fixed,  therefore applying the Hölder inequality, the sum is bounded by
 \[2^{p-1}\sum_{n\in \mathbb{Z}}\sum_{\substack{m\in \mathbb{Z}_*\\ q(m,n)=0}} \frac{1}{|m|^{\lambda p }}  \leq 2^{p-1}\sum_{m\in \mathbb{Z}_*}\sum_{\substack{n\in \mathbb{Z}\\ q(m,n)=0}} \frac{1}{|m|^{\lambda p }} \leq 2^{p}\sum_{m\in \mathbb{Z}_*} \frac{1}{|m|^{\lambda p }}\leq C_{\lambda,p}.  \]

We  turn to the second sum. Let  $p'$ denote the dual exponent of  $p$, and $\lambda':=\lambda-(1-p^{-1}).$ Then applying the Hölder inequality  the second sum is bounded by  
\begin{equation*}
\begin{aligned}
\sum_{n\in \mathbb{Z}}\Big[\sum_{\substack{m\in \mathbb{Z}_*\\ q(m,n)\neq 0}} \frac{|f(q(m,n))|^p}{|m|^{(\lambda'/2)p }}\Big]  \Big[\sum_{\substack{m\in \mathbb{Z}_*\\ q(m,n)\neq 0}} \frac{1}{|m|^{(\lambda-\lambda'/2)p'}}\Big]^{p-1}.
\end{aligned}
\end{equation*}
Since $(\lambda-\lambda'/2)p'>1$, this in turn is bounded by 
\begin{equation*}
\begin{aligned}
C_{\lambda,p}\sum_{\substack{(m,n)\in   \mathbb{Z}_*\times \mathbb{Z} \\ q(m,n)\neq 0    }} \frac{|f(q(m,n))|^p}{|m|^{\lambda'p/2 }}= C_{\lambda,p} \sum_{k\in \mathbb{Z}_*}|f(k)|^p\sum_{(m,n)\in A_k} \frac{1}{|m|^{\lambda'p/2 }}    .  
\end{aligned}
\end{equation*}
Thus we need to bound the inner sum. By Lemma 1 we have at most  $4$ solutions in $A_k$ with $|m|\leq |k|^{1/4}/\delta$, and the cardinality of $A_k$ is bounded by $C_{\varepsilon}|k|^{\varepsilon}$. Choosing $\varepsilon=\lambda'p/10$,  we conclude the proof with  
\begin{equation}\label{2.5}
\begin{aligned}
\sum_{(m,n)\in A_k} \frac{1}{|m|^{\lambda'p/2 }}\leq 4+ C_{\lambda,p}\delta^{\lambda'p/2}|k|^{\lambda'p/10-\lambda'p/8}\leq C_{\lambda,p,\Delta}.
\end{aligned}
\end{equation}

\end{proof}

%=====================================================================================================================================================================================================================================================================================================================================================================================================

\section{Extension to Polynomials}

In this section we prove Theorem 2.  We rely on  algebraic operations to reduce to the case of binary quadratic forms, and once  there  use the bounds in \cite{ts} for sums of the type \eqref{1.2}.  We will also clearly observe where and how  the dependence on $\alpha$  arises. Thus once we prove Theorem 3 in the next section, Theorem 4 will easily follow.

Before the proof proper, we demonstrate the reduction idea for any polynomial
  $P$ of nonzero discriminant.  Let $Q$ be the corresponding form.   Let $m=u+r, \ n=v+s$. Then  we have
\begin{equation*}
\begin{aligned}
P(m,n)=P(u+r,v+s)&=a[u^2+2ur+r^2]+b[uv+us+vr+rs]\\ &+c[v^2+2vs+s^2]+d[u+r]+e[v+s]+f \\ &=Q(u,v)+u[2ar+bs+d]+v[2cs+br+e]+P(r,s).
\end{aligned}
\end{equation*}
So to annihilate the first order terms we need 
\begin{equation}\label{y3.2} 
 \begin{bmatrix}
2a  & b \\ b &  2c
\end{bmatrix}  
     \begin{bmatrix}
     r  \\   s
     \end{bmatrix} =   \begin{bmatrix}
          -d  \\   -e
          \end{bmatrix}.       \end{equation}
As the discriminant is nonzero, the unique solution pair  is  $r=\alpha/\Delta, \ s=\beta/\Delta$. We also observe that
\begin{equation*}
\begin{aligned}
P(r,s)=\frac{1}{2}\Big[r(2ar+bs+d)+ s(2cs+br+e) +dr+es \Big]+f &=\frac{dr+es}{2}+f=\Gamma
\end{aligned}
\end{equation*}
Then $P(m,n)=k$ if and only if
$Q(m-\alpha/\Delta,n-\beta/\Delta)=k-\Gamma .$
To deploy the theory of representation of integers by quadratic forms we multiply both sides by $\Delta^2$, and  turn  the  variables of this last equality into integers. Let $m':=\Delta m-\alpha$ and $n':=\Delta n-\beta$, and also $k'=\Delta^2(k-\Gamma).$     The map $(m,n,k)\mapsto (m',n',k')$ clearly is injective.  Thus  $P(m,n)=k$  if and only if $Q(m',n')=k'$.

\begin{proof}

We start with polynomials of negative or positive nonsquare discriminant, and first investigate $p=1$ case. The general case will follow from similar arguments after applying the Hölder inequality.  As  made clear in the introduction we are to  bound  
\eqref{1.2} uniformly in $k.$  It is bounded by

\begin{equation}\label{y3.1}
\begin{aligned}
\sum_{\substack{Q(m,n)=k'  \\ m\neq -\alpha}} \frac{|\Delta|^{\lambda}}{|m+\alpha|^{\lambda}}\leq \sum_{\substack{Q(m,n)=k' \\ |m|>2|\alpha|}} \frac{|2\Delta|^{\lambda}}{|m|^{\lambda}}+ \sum_{\substack{Q(m,n)=k' \\ |m|\leq 2|\alpha|
 \\  m\neq\alpha }} \frac{|\Delta|^{\lambda}}{|m+\alpha|^{\lambda}}.
\end{aligned}
\end{equation}
The conditions $Q(m,n)=k', \ |m|>2|\alpha|$ imply that $(m,n)\in A_{Q,k'}$. For each fixed $m$ there can be at most two values of $n $ with $Q(m,n)=k'$, therefore the cardinality of pairs $(m,n)$ in the last sum is bounded by $8|\alpha|$. Therefore
\begin{equation}\label{3.1}
\begin{aligned}
\leq |2\Delta|^{\lambda}\sum_{(m,n)\in A_{Q,k'} } \frac{1}{|m|^{\lambda}}+ 8|\alpha||\Delta|^{\lambda}.
\end{aligned}
\end{equation}
We therefore need boundedness of the sum over $ A_{Q,k'}$. For $Q$  a positive definite form the equation (40) of  \cite{ts}   bounds this sum by  a constant $C_{\lambda,\Delta}$.  For negative definite $Q$ boundedness follows from the identity  $A_{Q,k'}=A_{-Q,-k'}$.   For a form of positive nonsquare discriminant it
is given by the equations (50),(51) of \cite{ts}. 
 Therefore, \eqref{3.1} is bounded by a constant  $C_{\lambda,\Delta,\alpha}.$

For  $1<p<\infty$   the Hölder inequality, and a decomposion via the sets $A_k$ gives
\begin{equation*}
\begin{aligned}
\|\mathcal{I}_{\lambda}f\|_{l^p(\mathbb{Z})}^p \leq C_{\lambda,p}\sum_{n\in \mathbb{Z}}\Big[\sum_{m\in \mathbb{Z}_*} \frac{|f(P(m,n))|^p}{|m|^{\lambda'p/2 }}\Big] \leq C_{\lambda,p}\sum_{k\in \mathbb{Z}} |f(k)|^p \sum_{(m,n)\in A_k} \frac{1}{|m|^{\lambda'p/2 }}.
\end{aligned}
\end{equation*}
We have seen that the inner sum is bounded by a constant depending on $\lambda,p,\Delta,\alpha.$ This concludes the case of negative or positive nonsquare discriminant.

When the discriminant is a positive square, we start with removal of some terms from the sum. 
\begin{equation*}
\begin{aligned}
\|\mathcal{I}_{\lambda}f\|_{p}^p&\leq \sum_{n\in \mathbb{Z}}\Big[\sum_{\substack{m\in \mathbb{Z}_*\\ P(m,n)=\Gamma}} \frac{|f(\Gamma)|}{|m|^{\lambda }} +\sum_{\substack{m\in \mathbb{Z}_*\\ P(m,n)\neq \Gamma}} \frac{|f(P(m,n))|}{|m|^{\lambda }}  \Big]^p \\ &\leq 2^{p-1}|f(\Gamma)|^p \sum_{n\in \mathbb{Z}}\Big[\sum_{\substack{m\in \mathbb{Z}_*\\ P(m,n)=\Gamma}} \frac{1}{|m|^{\lambda }} \Big]^p +2^{p-1} \sum_{n\in \mathbb{Z}}\Big[\sum_{\substack{m\in \mathbb{Z}_*\\ P(m,n)\neq \Gamma}} \frac{|f(P(m,n))|}{|m|^{\lambda }}\Big]^p  .
\end{aligned}
\end{equation*}
We first handle the second sum, which, as will be seen,  is bounded    whenever $\lambda>1-p^{-1}$. 
We apply the Hölder inequality, and then decompose
\begin{equation*}
\begin{aligned}
\leq C_{\lambda,p}\sum_{n\in \mathbb{Z}}\Big[\sum_{\substack{m\in \mathbb{Z}_*\\P(m,n)\neq \Gamma}} \frac{|f(P(m,n))|^p}{|m|^{\lambda'p/2 }}\Big]\leq C_{\lambda,p}\sum_{k\in \mathbb{Z}\setminus\{\Gamma\}} |f(k)|^p \sum_{(m,n)\in A_k} \frac{1}{|m|^{\lambda'p/2 }}.  
\end{aligned}
\end{equation*}
It remains to bound the  inner sum. It satisfies
\begin{equation*}
\begin{aligned}
\leq  \sum_{\substack{Q(m,n)=k'\\m\neq -\alpha}} \frac{|\Delta|^{\lambda'p/2}}{|m+\alpha|^{\lambda'p/2}}&\leq \sum_{\substack{Q(m,n)=k' \\ |m|>2|\alpha|}} \frac{|2\Delta|^{\lambda'p/2}}{|m|^{\lambda'p/2}}+ \sum_{\substack{Q(m,n)=k' \\ |m|\leq 2|\alpha|\\ m\neq -\alpha  }} \frac{|\Delta|^{\lambda'p/2}}{|m+\alpha|^{\lambda'p/2}}\\ &\leq 
|2\Delta|^{\lambda'p/2}\sum_{(m,n)\in A_{Q,k'} } \frac{1}{|m|^{\lambda'p/2}}+ 8|\alpha||\Delta|^{\lambda'p/2}.
\end{aligned}
\end{equation*}
The condition $k\neq \Gamma$  ensures $k'\neq 0$,  we can therefore use \eqref{2.5} to conclude that this is bounded by a constant depending on $\lambda,p,\Delta,\alpha.$ 

To evaluate the first sum we recall  from our exploration above
\[\{(m,n)\in \mathbb{Z}^2: \ P(m,n)=\Gamma\}=\{(m,n)\in \mathbb{Z}^2: \ Q(m-\alpha/\Delta,n-\beta/\Delta)=0\}.\]
If we define
\[\gamma_1=\frac{a_2\alpha+c_1\beta}{\Delta},    \ \ \ \ \     \gamma_2=\frac{a_1\alpha+c_2\beta}{\Delta},  \]
and  recall the factorization of $Q$ achieved in section 2 
\[  =\{(m,n)\in \mathbb{Z}^2:  a_2m+c_1n=\gamma_1 \} \cup  \{(m,n)\in \mathbb{Z}^2:  a_1m+c_2n=\gamma_2 \}.   \]
 Here it becomes clear that $P(m,n)=\Gamma$ is solvable in integers if and only if at least one of $\gamma_1/g_1,\gamma_2/g_2  $ is an integer, in which case it has infinitely many solutions equally spaced on a line.  Once we have a solution it is immediate that $\Gamma$ and therefore  $Q(e,-d)/\Delta$ is an integer. We remark that the converse is not true, that is   $Q(e,-d)/\Delta \in \mathbb{Z}$ does not imply that $P(m,n)=\Gamma$ is solvable.  This can be seen from the example $P(m,n)=4m^2-4n^2-4n$ for which $\Gamma=Q(e,-d)/\Delta=1$, but clearly $P(m,n)=1$ is not solvable.
 
 Hence if $\gamma_i/g_i, \ i=1,2$ are both nonintegers  the first sum contributes zero, and $\lambda>1-p^{-1}$ is a sufficient condition.
 If  at least one of $\gamma_i/g_i, \ i=1,2$ is an integer,  then we further  assume $\lambda>p^{-1}$,  and  treat the first sum as follows
\[\leq 2^{p-1}\sum_{n\in \mathbb{Z}}\sum_{\substack{m\in \mathbb{Z}_*\\ P(m,n)=\Gamma}} \frac{1}{|m|^{\lambda p }}  \leq 2^{p-1}\sum_{m\in \mathbb{Z}_*}\sum_{\substack{n\in \mathbb{Z}\\ P(m,n)=\Gamma}} \frac{1}{|m|^{\lambda p }} \leq 2^{p}\sum_{m\in \mathbb{Z}_*} \frac{1}{|m|^{\lambda p }}\leq C_{\lambda,p}.  \]

It remains to prove the case $p=1$ and $P(m,n)=\Gamma$ not solvable, but this follows immediately from the arguments already expounded.

\end{proof}

%====================================================================================================================================================================================================================================================================================================================================================================================================================================================================================================================

\section{Uniform estimates}

In this section we will prove Theorem 3, and as an application of it obtain Theorem 4.
Our main tool in this will be the large sieve, which we  describe concisely.  It arises from orthogonality estimates on additive characters, and was first proposed by Linnik \cite{lin}, to be greatly developed  by subsequent work, see \cite{ik} for details.  Let $\mathcal{M}$ be a finite set of integers contained in an interval of length $M\geq 1$, and $\mathcal{P}$ be a  set of primes. For each $p\in \mathcal{P}$ let $\Omega_p \subset \mathbb{Z}/p\mathbb{Z}$ be a set of residue classes with  cardinality less than $p$.  We define 
\[   H:=\sum_{p\in \mathcal{P}\cap[1,Q]}\frac{\#\Omega_p}{p-\#\Omega_p} .  \]
The large sieve gives  the estimate
\[\#\{ m\in \mathcal{M}:  \  m \ (\text{mod} \ p)\notin \Omega_p   \ \ \text{for all}  \ \   p \in  \mathcal{P}              \}   \leq \frac{M+Q^2}{H}.    \]
It is very reasonable and common to choose $Q=\sqrt{M}$, and we will do so as well.

In order to implement the large sieve we will need the prime number theorem in arithmetic progressions. The number of primes $p\leq x$ is denoted by $\pi(x)$, and    the number of primes $p\leq x$ with $p\equiv a \ (\text{mod} \ q)$ is denoted by $\pi(x;q,a).$  We note that as $\gcd(q,a)$ must divide $p$,  unless $\gcd(q,a)=1$ only primes    that may satisfy $p\equiv a \ (\text{mod} \ q)$    are the prime factors of $q$.
So all other primes reside in $\phi(q)$ residue classes  given by $a$  prime to $q$.   Here $\phi(q)$ is the Euler totient function.
   The prime number theorem is
\[\lim_{x\rightarrow \infty}  \frac{\pi(x)}{x/\log x} =1.\]
The prime number theorem in arithmetic progressions elaborates on this result by  showing that  these primes are distributed equally among the $\phi(q)$ equivalence classes.
\[\lim_{x\rightarrow \infty}   \frac{\pi(x;q,a)}{x/\log x}=\frac{1}{\phi(q)},\]
 This theorem guarantees  the existence of a constant $C_{q,a}$ such that  for $x$  not less than this constant 
$  \pi(x;q,a)\geq x/2\phi(q)\log x.$
Defining $C_q=\max_a C_{q,a}$, for $x$ not less than this constant we have  this inequality uniformly in $a.$

We will make extensive use of the theory of quadratic residues as we apply the large sieve. We  therefore  briefly state the essentials of  this theory. For a complete treatment see \cite{ld}. An integer $m$  prime to an integer $n\geq 2$ is a quadratic residue of $n$ if  $x^2\equiv m \  (\text{mod} \ n)$ is soluble, otherwise it is   a quadratic nonresidue of $n$.  Henceforth we  concentrate mostly on quadratic residues of odd primes $p$, and use the terms residue and nonresidue to mean quadratic residue and quadratic nonresidue.  As  $x^2\equiv (p-x)^2  \ (\text{mod} \ p)$, there are at most $(p-1)/2$  residues, but as a degree $r$ congruence  in prime modulus has at most $r$ solutions, there must be exactly $(p-1)/2$ residues, and thus $(p-1)/2$  nonresidues as well. We define  Legendre's symbol $(m|p)$ for $m$ not divisible by $p$ as equal to $1$ if $m$ is a residue, and to $-1$  if it is a nonresidue. We observe that as    $x^2\equiv m \  (\text{mod} \ p)$ and $y^2\equiv n \ (\text{mod} \ p)$ imply $(xy)^2\equiv  mn \ (\text{mod} \ p)$, the product of two residues is a residue, and as $x\mapsto mx$ is an automorphism  of the group  $\mathbb{Z}/p\mathbb{Z}$, the product of a residue with a nonresidue must be a nonresidue. Finally, this last argument, implemented with automorphisms induced by  nonresidues implies that the product of two nonresidues is a residue. Thus  Legendre's symbol satisfies  $(mn|p)=(m|p)(n|p).$

Jacobi's symbol extends Legendre's symbol to nonprime moduli. For $n$ any positive odd number, and $m$ an integer  prime to $n$ we define $(m|1)=1$ for $n=1$, and $(m|n)=(m|p_1)(m|p_2)\ldots(m|p_l)$ for $n=p_1p_2\ldots p_l$ with $p_i, \ 1\leq i\leq l$ being odd primes not necessarily distinct.  When $m$ is a  residue of $n$ it is a  residue of each prime factor of $p$, therefore   $(m|n)=1$. But the converse is not true, 
as when an even number of $(m|p_i)$ are negative we immediately have $(m|n)=1$. 
The fundamental result of the theory of quadratic residues is the quadratic reciprocity law, and with Jacobi's symbol at hand we can state a general version of it.  For  positive, odd, relatively prime integers $m,n$  we have 
\[(m|n)(n|m)=(-1)^{\frac{(m-1)(n-1)}{4}}.\]
We also note the particular cases  $(2|n)=(-1)^{(n^2-1)/8}$ and  $(-1|n)=(-1)^{(n-1)/2}$ of Jacobi's symbol that  will be of use below.

We will face, in the course of  proof of Theorem 3,   two fundamental issues regarding quadratic residues. The first of these is to know which odd primes $p$ make a fixed integer $m$ a quadratic nonresidue.   This concerns the multiplicative structure of residues, which, as we already have caught a glimpse of, is rather rich.  We will exploit this via  Chinese remainder theorem that we recall now. Let $m_1,m_2,\ldots,m_l$ be positive integers any two of which are relatively prime, and let $m$ be their product.  Then $x+m\mathbb{Z}\mapsto (x+m_1\mathbb{Z},x+m_2\mathbb{Z},\ldots, x+m_l\mathbb{Z})$ is a ring isomorphism from
$\mathbb{Z}/m\mathbb{Z}$  to $\mathbb{Z}/m_1\mathbb{Z}\times \mathbb{Z}/m_2\mathbb{Z}\times \cdots\times  \mathbb{Z}/m_l \mathbb{Z}$.  Therefore this map is also a group isomorphism between multiplicative groups 
$(\mathbb{Z}/m\mathbb{Z}) ^*$ and $(\mathbb{Z}/m_1\mathbb{Z})^*\times( \mathbb{Z}/m_2\mathbb{Z})^*\times \cdots\times  (\mathbb{Z}/m_l \mathbb{Z})^*.$  

A perfect square is always a residue.  A number cannot be a residue or nonresidue for its prime factors.  Hence,  for a fixed nonsquare integer $m$ we are looking for odd primes $p$ that yield $(m|p)=-1$.  Letting $m=(-1)^i|m|$ with $i$ being zero or one,  $(m|p)=(-1|p)^i(|m||p)$.  Writing $|m|=st^2$ with $s$  squarefree we obtain $(|m||p)=(s|p),$  and letting  $s=2^jr$ with $j$ zero or one, and $r$ odd,   $(s|p)=(2|p)^j(r|p)$. Now applying quadratic reciprocity 
\[(m|p)=(-1)^{\frac{(r-1)(p-1)}{4}}(-1|p)^i(2|p)^j (p|r) =(-1)^{i\frac{p-1}{2}+j\frac{p^2-1}{8}+\frac{(r-1)(p-1)}{4} } (p|r)    \]    

We first investigate the exponent of $-1$.  We only need to know  its value in modulus 2.  As $p$ is an odd prime, it is congruent to one of   $p_0=1,3,5,7$ in modulus 8.   As $r$ is odd, it is congruent to one of $r_0=1,3$ in modulus 4.  Thus in modulus 2 the exponent of $-1$ is congruent to 
\[ i\frac{p_0-1}{2}+j\frac{p_0^2-1}{8}+\frac{r_0-1}{2}\frac{p_0-1}{2}.      \]  
We calculate this for all 8 possibilities of $(i,j,r_0)$ in modulus 2 to obtain 

{\bf 1.} for $(0,0,1),(1,0,3),$ zero for all four values of $p_0,$
 
 {\bf 2.} for the remaining six triples,  zero for two  values of $p_0$, and one for the other two values.
  
  It remains to compute $(p|r)$. When $r=1$ this is by definition 1. We note that when this happens $(i,j,r_0)$ cannot be the triples  considered in  {\bf 1},  for $(0,0,1)$ together with $r=1$ would mean that $m$ is a perfect square, and $(1,0,3)$ is inconsistent with $r=1.$ Hence for $r=1$ we can conclude that $(m|p)=1$ for $p$ in  two equivalence classes of $8$, and $(m|p)=-1$ for $p$ in  the other two  equivalence classes. 
  
  We now assume $r>1$, in which case it can be factorized    into distinct odd primes $r=r_1r_2\cdots r_l$.   As $\gcd(p,r)=1$, there are $\phi(r)$ equivalence classes  of modulus $r$, and  $\phi(r_i)=r_i-1$  classes  of modulus $r_i$,  to which $p$ may belong. Since Euler's totient function is multiplicative 
  $\phi(r)=\phi(r_1)\phi(r_2)\cdots\phi(r_l).$ 
  The definition of Jacobi's symbol gives 
  $(p|r)=(p|r_1)(p|r_2)\cdots(p|r_l)$.  If we fix the value of one factor $(p|r_i)$  as plus or minus one there are $(r_i-1)/2$ equivalence classes of modulus $r_i$ to which 
  $p$ may belong. So if the value of every factor is fixed, by Chinese remainder theorem,  there are $2^{-l}\phi(r)$ equivalence class of modulus $r$ to which $p$ may belong.  For $(p|r)$ to be one, an even number of factors  should be fixed as $-1$, this gives rise to 
  \[{l\choose{0}} +{ l\choose{2}} +{l\choose{4}}+ \cdots+{ l\choose{2\lfloor   l/2\rfloor }}  =2^{l-1} \]  
  choices, and thus to $2^{-1}\phi(r)$ equivalence classes of $r$. For $(p|r)$ to be $-1$ we again have $2^{-1}\phi(r)$  classes.

  Thus we  conclude via Chinese remainder theorem  that  in  modulus $8r$ for $p$ in  $2\phi(r)$ equivalence classes   $(m|p)=1$, and for $p$ in $2\phi(r)$ equivalence classes $(m|p)=-1$. 
This conclusion combined with the prime number theorem in arithmetic progressions means  that essentially half of all primes make  a nonsquare $m$ a quadratic residue, and half make it a quadratic nonresidue. 

The second issue  regarding quadratic residues we need to understand   concerns their additive structure.  Specifically,   if      the set of quadratic nonresidues of  a prime is shifted by a fixed integer, how many of them are still nonresidues?  This problem was answered in full by Perron in his article \cite{op}. Results depend on the value of the prime in modulus 4.  Let $p$ be a prime  and let $N_p$ denote its set of nonresidues. Let $s$ be an integer relatively prime to $p.$ Then the set $N_p+s\cap N_p$ has cardinality

{\bf 1.}  $k-1$ if $p=4k-1$, 

{\bf 2a.}  $k$ if $p=4k+1$ and $s\notin N_p $, 

{\bf 2b.}  $k-1$ if $p=4k+1$ and $s\in N_p $, \\
Thus we  conclude  that  for any $p$ the set in question  has cardinality at least $(p-5)/4$. 

We are now ready to prove Theorem 3. We first estimate the  density of  $m$ that  satisfy  $am^2+n^2=N$ within an interval, and then  using dyadic decomposition apply this to bound the sum  \eqref{1.5}. 
To obtain the density result  we apply the large sieve by showing that for primes $p$ comprising a  sufficiently large set   chosen via the theory of quadratic residues,  there are roughly $p/2$ equivalence classes  elements of which cannot be in the set of $m$ mentioned above.  

\begin{proof}
Let  $\mathcal{M}_{\tau}:=(\tau-M,\tau+M)\cap \mathbb{Z}$ for an arbitrary integer  $\tau$. We want to estimate the cardinality of  the set 
\[ {E}_{\tau}:= \{m\in \mathcal{M}_{\tau} :   am^2+n^2=N \ \   \text{for some }   n\in \mathbb{Z} \} . \]
As we have $  n^2=N-am^2$,  for $m$ to be in  $E_{\tau}$ the term $N-am^2$ must not be a quadratic nonresidue in any modulus.  Let $\mathcal{P}$ be the set of primes  greater than 5 for which $(-a|p)=-1.$ 
 For  these primes  we will establish the existence of  
 equivalence classes $\Omega_p$  elements $m$ of which make $N-am^2$ nonresidue in modulus $p$. We have two cases depending on whether $p$ divides $N.$ 

First assume $p$ does divide $N$.  Then   $ N-am^2\equiv -am^2 \ (\text{mod} \ p)$.  
 As $-a$ is a nonresidue and $m^2$ is a residue unless $m\equiv 0 \ (\text{mod} \ p)$, their product is a nonresidue unless $m\equiv  0 \ (\text{mod} \ p)$. Therefore   $\Omega_p$ contains every equivalence class except 0.

Now assume $p$ does not divide $N.$ As the set   $\{-am^2  \ (\text{mod} \ p):  m\not \equiv 0  \ (\text{mod} \ p)\} $ gives the set of   nonresidues  of $p$, the  set $\{N-am^2  \ (\text{mod} \ p):  m\not \equiv 0  \ (\text{mod} \ p)\} $ represents shifting of these by a number  prime to $p$. Therefore it contains at least $(p-5)/4$ nonresidues, for each of which we have two incongruent values of $m$. So $\Omega_p$ contains at least $(p-5)/2$ equivalence classes.

The set $E_\tau$ is contained in  
$\{m\in \mathcal{M}_{\tau} :  m \ (\text{mod} \ p )\notin \Omega_p  \ \text{for  all } \ p\in \mathcal{P}   \} $, and the cardinality of this set can be estimated by applying the large sieve.  We 
consider the elements of $\mathcal{P}$ bounded by $\sqrt{2M}.$  
Letting $r$ be the odd part of squarefree part of  $|a|$, 
our exposition of  quadratic residues makes it clear that $\mathcal{P}$ is the set of primes   that reside in $2\phi(r)$ equivalence classes of $8r$,  exceed 5,  and does not divide $-a$.  So,  for $M\geq K_a:= [8(3+2\log|a|)]^4+C_{8r}^2$  we can estimate 
\[\# \mathcal{P}\cap [1,\sqrt{2M}] \geq   \frac{\sqrt{2M}}{4\log{\sqrt{2M}}} -(3+2\log|a|) \geq   \frac{\sqrt{2M}}{8\log{\sqrt{2M}}} \geq   \frac{\sqrt{M}}{4\log{M}}.\]
    We easily calculate  $H\geq 6^{-1}\# \mathcal{P}\cap [1,\sqrt{2M}]. $ Therefore
\[ \#\{m\in \mathcal{M}_{\tau} :  m \ (\text{mod} \ p )\notin \Omega_p  \ \text{for  all } \ p\in \mathcal{P}   \} \leq \frac{4M}{H}\leq 10^2\sqrt{M}\log M .  \]
As seen clearly this bound is independent of $\tau, N.$

With this result at hand we can  proceed to  \eqref{1.5}.  Consider a decomposition of $\mathbb{Z}-\{\tau\}$ into dyadic subsets 
$D_j:=  \{m\in \mathbb{Z}:    \  2^{j-1} \leq  |m-\tau|  < 2^{j}  \}  $ for $j\in \mathbb{N}$.
Let $j_0$ be such that $2^{j_0-1} \leq  K_a< 2^{j_0} $.
With  these we can write
\begin{equation*}
\begin{aligned}
\sum_{\substack{(m,n)\in\mathbb{Z}-\{\tau\}\times \mathbb{Z} \\  am^2+n^2=N   }  }\frac{1}{|m-\tau|^{\lambda}}& =\sum_{j=1}^{j_0}\sum_{\substack{(m,n)\in D_j\times \mathbb{Z} \\  am^2+n^2=N   }  }\frac{1}{|m-\tau|^{\lambda}}  +\sum_{j>j_0}^{\infty}\sum_{\substack{(m,n)\in D_j\times \mathbb{Z} \\  am^2+n^2=N   }  }\frac{1}{|m-\tau|^{\lambda}}    \\  &\leq 8K_a+200\sum_{j=1}^{\infty} \frac{2^{j/2} \log 2^j }{2^{\lambda(j-1)}}, 
\end{aligned}
\end{equation*}
and this is bounded by a constant depending only on $a,\lambda$.

\end{proof}

 Theorem 4 concerns polynomials of negative or positive nonsquare discriminant, and by completing squares representation by these can be connected to representation by diagonal forms.  Indeed, if $q(x,y)=ax^2+bxy+cy^2$ represents $k$ with $(m,n)$,  then multiplying both sides by $4c$
 \[am^2+bmn+cn^2=k \ \ \ \implies \ \ \     -\Delta m^2+(bm+2cn)^2=4ck.                                                            \]
So the form $q'(x,y)=-\Delta(q) x^2+y^2$ represents $4ck$ with $(m,bm+2cn)$. Further, when $c\neq 0$ the map $(m,n)\mapsto (m,bm+2cn)$  is injective. The proof of Theorem 4 is similar to the proof of Theorem 2, but employs this connection  to apply Theorem 3, instead of decomposition arguments and bounds from the work \cite{ts}.

\begin{proof}
We first consider $p=1$ case,  which we reduced to bounding \eqref{1.2}. As usual $Q$ denotes the form corresponding to the polynomial $P$. Let 
$Q'(x,y)=-\Delta(P)x^2+y^2$. Combining  \eqref{y3.1} with the relation  we just described about   representation by $Q$ and $Q'$ we obtain 
\[ \sum_{(m,n)\in A_k} \frac{1}{|m|^{\lambda }}\leq \sum_{\substack{Q(m,n)=k'  \\ m\neq -\alpha}} \frac{|\Delta|^{\lambda}}{|m+\alpha|^{\lambda}}  \leq \sum_{\substack{Q'(m,n)=4ck'  \\ m\neq -\alpha}} \frac{|\Delta|^{\lambda}}{|m+\alpha|^{\lambda}}. \]
By Theorem 3 this last sum is bounded by a constant that depends only on $\lambda,\Delta$ when $\lambda>1/2.$

When $p>1$,  let $\lambda''=\lambda-(1-(2p)^{-1})$. By the Hölder inequality we have
\begin{equation*}
\begin{aligned}
\|\mathcal{I}_{\lambda}f\|_{l^p(\mathbb{Z})}^p\leq     \sum_{n\in \mathbb{Z}}\Big[\sum_{m\in \mathbb{Z}_*} \frac{|f(P(m,n))|^p}{|m|^{[(p^{-1}+\lambda'')/2]p }}\Big]\Big[\sum_{m\in \mathbb{Z}_*}\frac{1}{|m|^{[\lambda-(p^{-1}+\lambda'')/2]p' }}\Big]^{p-1}.
\end{aligned}
\end{equation*}
The second sum is clearly finite, and depend only on $\lambda,p$. As for the first sum, we again perform a decomposion using the sets $A_k$,
\begin{equation*}
\begin{aligned}
\leq C_{\lambda,p}\sum_{k\in \mathbb{Z}} |f(k)|^p \sum_{(m,n)\in A_k} \frac{1}{|m|^{(1+\lambda''p)/2 }}.
\end{aligned}
\end{equation*} 
As  $(1+\lambda''p)/2$  exceeds $1/2$,  the inner sum depends only on $\lambda,p,\Delta.$ This concludes the proof.

\end{proof}

%=========================================================================================================================================================================================================================================================================================================================================================================================================================================================================================================================================================================================================================================

\section{Lattice points and diophantine approximation}

In this section we explore lattice points on conics via their connections to diophantine approximation.  We  first prove Theorem 5. In its proof our main tool is Schmidt's theorem on simultaneous diophantine approximation  \cite{sch} which we now recall.  Let $\theta_1, \theta_2,\ldots, \theta_l$ be real algebraic numbers such that $1,\theta_1, \theta_2,\ldots, \theta_l$ are linearly
independent over the rationals. Then  for every $\varepsilon> 0$ there are only finitely many positive integers $q$ with
\[q^{1+\varepsilon }\|q\theta_1\|  \|q\theta_2\|\cdots \|q\theta_l\|   <1, \] 
where $\|\cdot\|$ is the distance to the nearest integer.
 In order to fulfill the hypothesis of  the theorem we will appeal to the well known fact, due to  Besicovitch \cite{bes}, that the set of square roots of squarefree natural numbers is linearly independent over the rationals.

To prove Theorem 5 we  obtain lattice points on hyperbolas with asymptotes of irrational algebraic  slopes. This part of the proof is very similar to Chan's works \cite{thc1,thc2}, although he views these hyperbolas as simultaneous Pell equations.   Then lattice points yield very close rational approximation of slopes of asymptotes, and this contradicts Schmidt's theorem.

\begin{proof}
Suppose   $N\in S'$ with  $N=R^2+r$ and $  R^{\rho\varepsilon}\geq h_l^{100l}$ where we define $\varepsilon:=4^{-1}\min\{1,2\rho(l-1)-1\}$.  Then $(R-h_i)^2+n_i^2=N$ is equivalent to $2Rh_i=h_i^2+n_i^2-r$, and this equation implies $Rh_i< n_i^2<3Rh_i $, and again from it for $i< j$ we obtain the equations
\[
h_i(h_j^2+n_j^2-r)= 2Rh_ih_j=h_j(h_i^2+n_i^2-r).
 \]
 We rearrange these as 
 \begin{equation}\label{5.1}
 h_in_j^2-h_jn_i^2=(h_i-h_j)(h_ih_j+r), 
 \end{equation}
 and let $h_{ij}$ denote the right hand side in this equation. We have $|h_{ij}|\leq 2h_j^3R^{1/2-\rho}< \sqrt{R}.$
 
 The equations \eqref{5.1} provide lattice points on the hyperbolas $h_ix^2-h_jy^2=h_{ij}$, and the slopes of asymptotes of these hyperbolas are given by $\pm\sqrt{h_i/h_j}$.  The hyperbolas are given by the graphs of the functions $y=\pm    \sqrt{(h_ix^2-h_{ij})/h_j  }$,
and with these we estimate the distance between the hyperbolas and their asymptotes at the points supplied by \eqref{5.1}.
  
\begin{equation*}
\begin{aligned}
\|\sqrt{h_i/h_j}n_j\| \leq |n_i-\sqrt{h_i/h_j}n_j|  
 & =  |  \sqrt{(h_in_j^2-h_{ij})/h_j  }  -    \sqrt{h_i/h_j}n_j  |  \\ 
&=  |h_{ij}/h_j|\cdot  [     \sqrt{(h_in_j^2-h_{ij})/h_j  }              +    \sqrt{h_i/h_j}n_j  ]^{-1}     \\  &\leq    |h_{ij}/h_j|\cdot  [ \sqrt{h_i/h_j}n_j  ]^{-1},
\end{aligned}
\end{equation*}
which is bounded by $ 2h_l^2R^{-\rho}.$  When $|r|\leq h_l^2$ this bound improves to $  2h_l^2R^{-1/2}.$

We have three cases depending on $r$. The first is when    $r\neq -h_ih_j$ for any $i\neq j$.  We  show that the linear independence hypothesis in Schmidt's theorem is fulfilled.    Let $c_i$ be rational coefficients and consider a linear combination  
\[  c_1\sqrt{h_1/h_l} +  c_2\sqrt{h_2/h_l}+\cdots +c_{l-1}\sqrt{h_{l-1}/h_l}+c_l=0.\]
 Multiplying both sides by     $\sqrt{h_l}$, and then writing $h_i=s_it_i^2$ with $s_i$ squarefree  we obtain 
\[  c_1t_1\sqrt{s_1} +  c_2t_2\sqrt{s_2}+\cdots +c_{l-1}t_{l-1}\sqrt{s_{l-1}}+c_lt_l\sqrt{s_{l}}=0.\]
By Besicovitch's  result this implies that all $c_i$ are zero unless $s_i=s_j$ for some $i<j$.  But this is not possible, as it would imply from  \eqref{5.1}
\[s_i|(t_in_j)^2-(t_jn_i)^2|= |(h_i-h_j)(h_ih_j+r)|< \sqrt {R}, \]
where  $r\neq -h_ih_j$ means   $t_in_j\neq t_jn_i$, and thus   the left hand side is at least $2\sqrt{R}$. Hence we established the independence of $1,\sqrt{h_1/h_l}, \ldots, \sqrt{h_{l-1}/ h_l}$ over the rationals.
As we have
\begin{equation*}
 n_l^{1+\varepsilon}\prod_{i=1}^{l-1}\|\sqrt{
h_i/h_l}n_l\| \leq  (3h_lR)^{\frac{1+\varepsilon}{2}}(2h_l^2R^{-\rho})^{l-1}\leq h_l^{4l}R^{-\rho(l-1)+\frac{1+\varepsilon}{2}}  \leq R^{\varepsilon (\frac{\rho}{25}-\frac{3}{2})},
\end{equation*}
when Schmidt's theorem is applied to $\sqrt{h_i/h_l}, \ 1\leq i\leq l-1$ with the value of $\varepsilon$ fixed at the beginning, this $n_l$ is one of the finite number of exceptions.

The second case is $r=-h_ih_j$ for some $i<j$ and $r<-h_{\lceil l/2 \rceil}^2$. We observe that $r\neq -h_ih_j$ if  $1\leq i<j\leq 3$. This, as in the first case, ensures that $1,\sqrt{h_1/h_3},\sqrt{h_2/h_3}$ are linearly independent over the rationals.  Further 
\begin{equation*}
 n_3^{1+\varepsilon}\prod_{i=1}^{2}\|\sqrt{h_i/h_3}n_3\| \leq  (3h_lR)^{\frac{1+\varepsilon}{2}}(2h_l^2R^{-1/2})^{2}\leq h_l^{9}R^{-3/8} \leq R^{-{1}/{4}}.
\end{equation*}
Thus  when Schmidt's theorem is applied to $\sqrt{h_i/h_3}, \  i=1,2$ with the value of $\varepsilon$ fixed at the beginning, this $n_3$ is one of the finite number of exceptions. 

The third case is $r=-h_ih_j$ for some $i<j$ and $r\geq -h_{\lceil l/2 \rceil}^2$.  When we apply the arguments of the second case to $h_{l-2},h_{l-1},h_l$, we observe $n_l$ to be an exception. 

 Infiniteness of   $S'$  would yield  infinitely many $N\in S'$ with $R^{\rho\varepsilon}\geq h_l^{100l}$  in one of the three cases above. From these $N$  using $Rh_i< n_i^2<3Rh_i $ we can extract infinitely many exceptions,  violating    Schmidt's theorem. Hence $S'$ must be  finite.

\end{proof}

We remark that, as the proof reveals, if we remove $h_ih_j, \ i<j$ from the set at the outset we can drop the condition $l\geq5$. In particular if we take $S$ to be the set of squares  $l=3$ is sufficient. We also remark that methods of this proof should yield a similar result for the hyperbola $x^2-y^2$.   Lastly, observe that ineffectiveness of Schmidt's theorem forces us to fix $h_i, \ 1\leq i \leq  l$ beforehand, and this precludes any progress towards the Conjecture 1.  Fielding effective results in diophantine approximation we next make some progress in this direction.

 We state the effective result we will use in the proof of Theorem 6 below. This result is recently obtained by Bugeaud \cite{yb}, and it improves upon the work of Turk \cite{jt}, employed by Chan in his work.  Let $a, b$ be positive integers such that none of $a, b,  ab$ is a full square, and  let $u, v$ be nonzero integers. Then, there exists an effectively computable, absolute
 real number $C$ such that all  solutions in positive integers $x, y, z$ of the  equations
\begin{equation*}
x^2-ay^2=u, \quad     \quad   \quad    \quad  z^2-by^2=v 
\end{equation*}
satisfy 
\begin{equation*}
\max\{ x,y,z\}\leq\big (\max\{|u|,|v|,2\}\big)^{C\sqrt{ab}\log a\log b}.
\end{equation*}

Having stated this result we are ready to prove Theorem 6. As in the proof of Theorem 5 we obtain simultaneous Pell equations, and we apply Bugeaud's theorem after verifying that these equations fulfill its hypotheses. Our proof differs from Chan's  in that we carry out this verification  in a simpler and more efficient way.

\begin{proof}
We start with  \eqref{1.6}. Let $N$ be large as described in the theorem.  Suppose for this $N$ the set in question contains more than 20 elements, this means there are five positive integers $\sqrt{|r|}<n_1<n_2<\ldots<n_5$ with $(m_i,n_i),  \ m_i>0$  in the set.  We let $m_i=R-h_i$, and note that $0<h_1<h_2<\ldots <h_5.$   From  $(R-h_i)^2+n_i^2=R^2+r$ we obtain the relations $h_i\leq 72\log^{\kappa/2} N$ and  $Rh_i< n_i^2<3Rh_i $,      and  for $i<j$ the equation
 \begin{equation}\label{5.2}
 h_in_j^2-h_jn_i^2=(h_i-h_j)(h_ih_j+r)=h_{ij},
 \end{equation}
with  $|h_{ij}|\leq 2^{5}h_j^3e^{2\log^{\kappa}N  }< \sqrt{R}.$ We  let $h_i=s_it_i^2$, with $s_i$ squarefree.

We have two cases, the first being $r<-h^2_3$. 
 In this case we consider the equations
 \begin{equation*}\label{}
 h_1n_3^2-h_3n_1^2=h_{13},  \quad     \quad   h_2n_3^2-h_3n_2^2=h_{23}. 
 \end{equation*}
 Multiplying both with $-h_3$ we obtain
  \begin{equation*}\label{}
(h_3n_1)^2-  h_1h_3n_3^2=-h_3h_{13},  \quad     \quad   (h_3n_2)^2-h_2h_3n_3^2=-h_3h_{23}, 
  \end{equation*}
 and observe that with
  \[(x,y,z)=(h_3n_1,n_3,h_3n_2),  \  \  \text{and}   \ \   \ (a,b,u,v)=(h_1h_3,h_2h_3,-h_3h_{13} ,-h_3h_{23} )\]
   this is a system of equations to which Bugeaud's theorem can be applied  if we can verify the conditions on $a,b,u,v$. Clearly our condition $r<-h^2_3$ implies $h_{ij}, \ 1\leq i<j\leq 3$ are nonzero, and thus $u,v$ are nonzero. Also observe that if one of $a,b,ab$  is a full square, then $s_i=s_j$ for some  $1\leq i<j\leq 3.  $  Then the equation \eqref{5.2} implies
 \[ s_i|(t_in_j)^2-(t_jn_i)^2|= |h_{ij}|< \sqrt {R} . \]
 As $h_{ij}$ are nonzero, $t_in_j\neq t_jn_i$, and this means the leftmost term  is at least $2\sqrt{R},$ leading to a contradiction. We therefore fulfilled the hypothesis of Bugeaud's theorem, on applying which
 \begin{equation*}
 \begin{aligned}
  \max\{  h_3n_1,n_3, h_3n_2 \}& \leq   \big(\max\{|h_3h_{13}|,|h_3h_{23}|,2\}\big)^{Ch_3\sqrt{h_1h_2}\log h_1h_3 \log h_2h_3}  \\ &\leq   \big(  e^{4\log^{\kappa}N}         \big)^{(72)^2C\log^{\kappa}N\log^2\log N}  \\ &\leq    e^{2^{16}C\log^{2\kappa}N\log^2\log N}.
  \end{aligned}
  \end{equation*}
Since the leftmost term  is  greater than $(N/2)^{1/4}$, this is a contradiction.

The second case is $r\geq -h_3^2$, and in this case the same argument with $h_3,h_4,h_5$ instead of $h_1,h_2,h_3$ yields the contradiction we  are looking for. Therefore the assumption that the set  contains more than 20 elements is wrong. 

The hyperbolic case \eqref{1.7} follows if we repeat the same arguments almost verbatim. 

As for  \eqref{1.8} we consider the injective map  $(u,v)=T(m,n)=(m+n,-m+n)$ from $\mathbb{Z}^2$ to itself.  The condition $|n-\sqrt{N}|\leq 2N^{1/4}\log^{\kappa/4}N  $ implies $|m-\sqrt{N}|\leq 4N^{1/4}\log^{\kappa/4}N  $. Combining these gives $|v|\leq 6N^{1/4}\log^{\kappa/4}N$, which means $T$ maps
 the set in \eqref{1.8} into the set 
\[\{ (u,v):  u^2-v^2=4N, \ \ \      |v|\leq 5(4N)^{1/4}\log^{\kappa/4}4N                           \}.\]
As  $4N\in E $, the cardinality of this set is at most $20$ by  \eqref{1.7}. Thus there are at most $5$ positive values of $u$, and to each of these correspond two values of $v$, which yield at most 10 pairs in \eqref{1.8}.

\end{proof}

The hypothesis imposed by Chan  \cite{thc2}  in his theorem on divisors is of the following form: Let $N$ be a sufficiently large integer that can be factored as $(M-a)(M+b)$ for integers $0\leq a\leq b\leq e^{\log^{\kappa}N }.$ We would like to illustrate the  reduction of his result under this condition to a subcase of  our \eqref{1.8}.  
 For such  $N$ if we let $R=2M+b-a$ we obtain 
$4N=R^{2}-(a+b)^2$, and thus observe that  $4N$ is of the form $R^2+r$ with $|r|\leq 4e^{2\log^{\kappa}N}$.  If $n$ is a divisor of $N$  with $|n-N^{1/2}|\leq N^{1/4}\log ^{\kappa/4}N$, then $2n$ is a divisor of $4N$ with $|2n-(4N)^{1/2}|\leq 2N^{1/4}\log ^{\kappa/4}N$. Therefore \eqref{1.8} applies to $4N$ to give at most  10 divisors $n$ of $N$.  
 
 Overall  Theorem 6 improves upon Chan's work in three respects. It reduces the cardinality of  the set of lattice points from $36$ and $18$ for the circle and divisor cases respectively to $20$ and $10.$ It enlarges the set of  possible values $N$ can take from points of the form $R^2\pm b^2$ to $R^2+r$. Finally by sharper computation and using Bugeaud's work the value of $\kappa$ is improved from $\kappa=2/7$ to $\kappa<1/2$.

\end{document}